\newtheorem{theorem}{Theorem}[section]
\newtheorem{lemma}[theorem]{Lemma}
\newtheorem{proposition}[theorem]{Proposition}
\newtheorem{corollary}[theorem]{Corollary}
\newtheorem{remark}[theorem]{Remark}
\theoremstyle{definition}
\numberwithin{equation}{section}
\begin{document}
 \UseRawInputEncoding

\title{On triharmonic hypersurfaces in  space forms }

\author{Yu Fu}

\address[Fu]{School of Data Science and Artificial Intelligence, Dongbei University of Finance and Economics,
Dalian 116025, P. R. China}\email{yufu@dufe.edu.cn}

\author{Dan Yang}
\address[Yang]{School of Mathematics and Statistics, Liaoning University, Shenyang, 110036, China}
\email{dlutyangdan@126.com}


\subjclass{Primary 53C40, 58E20; Secondary 53C42}



\keywords{$k$-harmonic maps, Triharmonic hypersurfaces,
constant mean curvature, constant scalar curvature}

\begin{abstract}
In this paper we study triharmonic hypersurfaces immersed in a space form $N^{n+1}(c)$.
We prove that any proper CMC triharmonic hypersurface in the sphere $\mathbb S^{n+1}$ has constant scalar curvature; any CMC triharmonic hypersurface in the hyperbolic space $\mathbb H^{n+1}$ is minimal. Moreover, we show that any CMC triharmonic hypersurface in the Euclidean space $\mathbb R^{n+1}$ is minimal provided that the multiplicity of the principal curvature zero is at most one. In particular, we are able to prove that every CMC triharmonic hypersurface in the Euclidean space $\mathbb R^{6}$ is minimal.
These results extend some recent works due to Montaldo-Oniciuc-Ratto and Chen-Guan, and give affirmative answer to the generalized Chen's conjecture.
\end{abstract}

\maketitle \markboth{Fu et al.} {On triharmonic hypersurfaces}

\section{Introduction}

Let \(\phi :(M,g)\rightarrow (N,{\bar{g}})\) be a smooth map between Riemannian manifolds $M$ and $N$.
A $k$-harmonic map \(\phi \), proposed by Eells and Lemaire \cite{Eells}, is a critical point of the $k$-energy functional

\begin{align*}
E_k(\phi )=\frac{1}{2} \int _M \big |(d+d^{*})^k \phi \big |^2 v_g.
\end{align*}
The Euler-Lagrange equation is given by \(\tau _k(\phi )\equiv 0\) , where \(\tau _k(\phi )\) is the $k$-tension field.
The concept of $k$-harmonic map is a natural generalization of harmonic map, and in particular for \(k=2\) and \(k=3\), the critical points of \(E_2\) or \(E_3\) are biharmonic or triharmonic maps, respectively (c.f.\cite{Wang1989}, \cite{Wang1991},\cite{Maeta2012PAMS}, \cite{Maeta2012Osaka}).

In recent years, biharmonic maps and biharmonic submanifolds have been widely studied, see \cite{Jiang1986, Chen2015, Ou-Chen-book2020} and the references therein. There are also  a lot of results on triharmonic maps and triharmonic submanifolds (c.f.\cite{Maeta2012Osaka, Maeta2012PAMS, Maeta2015houston, Maeta-Naka2015, Nakauchi2018, Montaldo-Ratto20181, Branding2020, Branding2022}). Concerning  triharmonic hypersurfaces in a space form $N^{n+1}(c)$,  Maeta \cite{Maeta2012PAMS} proved that any compact CMC triharmonic hypersurface in $N^{n+1}(c)$ for $c\le 0$ is minimal.  Recently, Montaldo-Oniciuc-Ratto \cite{Montaldo-Oniciuc-Ratto2022} gave a systematic study of CMC triharmonic hypersurface in space forms. The authors proved that

\begin{theorem} {\rm([15])}
Let $M^n$ be a CMC triharmonic hypersurface in $N^{n+1}(c)$ with $c\leq0$ and assume that the squared norm of the second fundamental
form $S$ is constant. Then $M^n$ is minimal.
\end{theorem}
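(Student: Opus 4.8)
The plan is to make the triharmonic equation $\tau_3(\phi)=0$ completely explicit for a CMC hypersurface and then extract minimality from the hypothesis that $S$ is constant together with $c\le 0$; the whole argument is local, which is exactly why it removes the compactness assumption in Maeta's theorem.

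First I would fix notation: let $\xi$ be a local unit normal, $A$ the shape operator, $H=\tfrac1n\operatorname{tr}A$ the (constant) mean curvature, $S=|A|^{2}$, and $\{e_i\}$ a local orthonormal frame on $M$, so that the tension field is $\tau(\phi)=nH\xi$; one may assume $H\not\equiv0$, otherwise $M^{n}$ is already minimal. Since $\bar R^{N}(X,Y)Z=c\big(\langle Y,Z\rangle X-\langle X,Z\rangle Y\big)$ is parallel, the equation $\tau_3(\phi)=0$ can be written as
\[
\bar\Delta^{2}\tau+\sum_{i}\bar R^{N}\!\big(\bar\Delta\tau,d\phi(e_{i})\big)d\phi(e_{i})+\sum_{i}\bar R^{N}\!\big(\tau,\bar\nabla_{e_{i}}\tau\big)d\phi(e_{i})=0,
\]
where $\bar\Delta$ is the rough Laplacian on $\phi^{*}TN$. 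The core computation is to evaluate these terms using the Gauss and Weingarten formulas and the Codazzi equation, which for a hypersurface in a space form gives $\operatorname{div}A=n\,\nabla H$, hence $\operatorname{div}A=0$ under the CMC hypothesis. This yields the identity $\bar\Delta(g\xi)=(\Delta g-gS)\xi-2A(\nabla g)$ for every $g\in C^{\infty}(M)$; taking $g=nH$ gives $\bar\Delta\tau=-nHS\,\xi$, and taking $g=-nHS$ gives $\bar\Delta^{2}\tau=-nH(\Delta S-S^{2})\,\xi+2nH\,A(\nabla S)$. For the curvature terms one uses $\sum_i\bar R^{N}(w\xi,d\phi(e_i))d\phi(e_i)=nc\,w\xi$ for a normal section $w\xi$, so the first sum equals $-n^{2}cHS\,\xi$; and $\langle\tau,d\phi(e_{i})\rangle=0$ while $\sum_{i}\langle\bar\nabla_{e_{i}}\tau,d\phi(e_{i})\rangle=-nH\operatorname{tr}A=-n^{2}H^{2}$, so the second sum equals $-n^{3}cH^{3}\,\xi$.

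Separating the normal and tangential components, $\tau_3(\phi)=0$ reduces (at points where $H\ne0$) to the system
\[
\Delta S=S^{2}-ncS-n^{2}cH^{2},\qquad A(\nabla S)=0 .
\]
Now I would invoke the hypothesis that $S$ is constant: then $\nabla S=0$ and $\Delta S=0$, the tangential equation holds automatically, and the normal one collapses to the pointwise algebraic identity $S^{2}=nc\,(S+nH^{2})$. Since $c\le0$ while $S\ge0$ and $H^{2}\ge0$, the right-hand side is $\le0$ and the left-hand side $\ge0$; therefore $S\equiv0$, so $A\equiv0$ and $M^{n}$ is totally geodesic, in particular minimal.

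I expect the laborious---though not conceptually difficult---part to be the tensorial bookkeeping that produces the displayed system, in particular getting the CMC identity $\bar\Delta(g\xi)=(\Delta g-gS)\xi-2A(\nabla g)$ exactly right (the vanishing of the $\operatorname{div}A$-term is the single place where ``CMC'' is used) and tracking the coefficients of the two curvature terms; after that the conclusion is immediate. Conceptually, the only ingredient beyond Maeta's compact theorem is the observation that ``$S$ constant'' promotes the integral identity $\int_{M}\Delta S=0$ to a pointwise one, which is precisely why no completeness or closedness of $M^{n}$ is needed.
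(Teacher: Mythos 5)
Your proposal is correct and takes essentially the same route as the paper: the system you derive, $H(\Delta S+S^{2}-ncS-n^{2}cH^{2})=0$ and $HA\nabla S=0$, is exactly the characterization the paper quotes as Proposition 2.1 from [15], and with $S$ constant the sign argument for $c\le 0$ is the same one the paper itself uses at the end of Section 3. One cosmetic point: having assumed $H\neq0$, the identity $S^{2}-ncS-n^{2}cH^{2}=0$ with $c\le0$ forces $S=0$ and hence $H=0$, so the argument properly ends in a contradiction giving $H\equiv0$ (minimality), rather than in the conclusion that $M^{n}$ is totally geodesic.
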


In particular, the assumption $S$ being constant was removed for $n=2$ in \cite{Montaldo-Oniciuc-Ratto2022}.

\begin{theorem} {\rm([15])}
Let $M^2$ be a CMC triharmonic surface in $N^3(c)$.
Then $M^2$ is minimal if $c\leq0$ and $M^2$ is an open part of the small hypersphere  if $c>0$.
\end{theorem}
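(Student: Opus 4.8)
The plan is to reduce the triharmonic condition, under the CMC assumption, to a coupled system for $S=|A|^{2}$ ($A$ the shape operator), and then to exploit the fact that when $n=2$ the principal curvatures $\lambda_{1},\lambda_{2}$ are completely determined by $\lambda_{1}+\lambda_{2}=2H$ and $\lambda_{1}^{2}+\lambda_{2}^{2}=S$.

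First I would compute $\tau_{3}(\phi)$ for a hypersurface and specialize it to the CMC case. Writing $\tau(\phi)=nH\eta$ with $\eta$ the unit normal, and using the Gauss and Weingarten formulas together with the Codazzi identity $\operatorname{div}^{M}A=n\operatorname{grad}^{M}H$ and the hypothesis $\operatorname{grad}^{M}H=0$, one checks that the rough Laplacian $\bar\Delta\tau(\phi)=-nHS\,\eta$ is purely normal, so that the tangential and normal parts of $\tau_{3}(\phi)=0$ decouple and, for $H\neq0$, become
\begin{equation}\label{eq-tri}
A(\operatorname{grad}S)=0,\qquad \Delta S=S^{2}-ncS-n^{2}cH^{2}.
\end{equation}
This is the CMC reduction of the triharmonic hypersurface equations of \cite{Montaldo-Oniciuc-Ratto2022}, and it is consistent with the first of the two theorems recalled above: when $S$ is constant it gives $S^{2}=nc(S+nH^{2})\le0$ for $c\le0$, forcing $S=0$.

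The key observation is that when $n=2$ the first equation in \eqref{eq-tri} already forces $S$ to be constant. Indeed, if $\operatorname{grad}S(p)\neq0$ at some point $p$, then $\operatorname{grad}S(p)\in\ker A_{p}$, so $0$ is a principal curvature at $p$; since $\lambda_{1}+\lambda_{2}=2H$ the other one equals $2H$, and hence $S(p)=4H^{2}$. Thus $S\equiv 4H^{2}$ on the open set $\{\operatorname{grad}S\neq0\}$, which is impossible unless that set is empty. Therefore $S$ is constant, so $(\lambda_{1}-\lambda_{2})^{2}=2S-4H^{2}$ is constant and $M^{2}$ is either totally umbilic or has $\lambda_{1}\neq\lambda_{2}$ everywhere. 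If $c\le0$ we are then in the hypotheses of the first theorem of \cite{Montaldo-Oniciuc-Ratto2022} recalled above, which gives $M^{2}$ minimal, contradicting $H\neq0$; hence every CMC triharmonic surface in $N^{3}(c)$ with $c\le0$ is minimal. Suppose finally $c>0$ and $H\neq0$. In the umbilic case $S=2H^{2}$, and the normal equation in \eqref{eq-tri} with $\Delta S=0$ reduces to $4H^{4}=8cH^{2}$, i.e. $H^{2}=2c$, an open part of the small geodesic hypersphere of $N^{3}(c)$. In the non-umbilic case, in a local orthonormal frame of principal directions the Codazzi equation forces the connection coefficients to vanish, so $M^{2}$ is flat; Gauss' equation then gives $\lambda_{1}\lambda_{2}=-c$, hence $4H^{2}=S-2c$, and substitution into the normal equation yields $(S-c)(S-2c)=0$; but $S=2c$ forces $H=0$, while $S=c$ gives the contradiction $(\lambda_{1}+\lambda_{2})^{2}=S+2\lambda_{1}\lambda_{2}=-c<0$. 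So a non-minimal CMC triharmonic surface can occur only for $c>0$, where it must be an open part of the small hypersphere.

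The main difficulty is the first step: verifying that the hypothesis $\operatorname{grad}^{M}H=0$ makes \emph{all} the higher curvature terms in $\tau_{3}(\phi)$ collapse, so that the normal part is only quadratic in $S$ and the tangential part is precisely $A(\operatorname{grad}S)$ up to a nonzero factor. This is a somewhat lengthy bookkeeping with the Gauss, Weingarten and Codazzi identities; everything afterwards is elementary, the only nontrivial input being the rigidity of surfaces for which $\lambda_{1}+\lambda_{2}$ and $\lambda_{1}^{2}+\lambda_{2}^{2}$ are both constant.
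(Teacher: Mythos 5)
Your proposal is essentially correct, but note that the paper never proves this statement itself: Theorem 1.2 is quoted from Montaldo--Oniciuc--Ratto, and the only ingredient of your argument that overlaps with the paper is the CMC characterization \eqref{triharmonic condition1}, which is exactly the paper's Proposition 2.1 -- so you may simply cite it rather than re-derive it (your sketch, that $\bar\Delta\tau(\phi)$ is purely normal in the CMC case, is true but by itself does not account for all the terms of $\tau_3$, so as written that step is the weakest link; citing the proposition removes it). Where your route genuinely differs from the paper's own machinery (used for Theorems 1.5--1.6 in arbitrary dimension) is the proof that $S$ is constant: the paper introduces $P_\alpha=e_1(\mu_\alpha)/\mu_\alpha$, establishes the recursions of Lemmas 2.3 and 2.4, and kills the set $\{\nabla S\neq0\}$ through Vandermonde-type systems, whereas you observe that for $n=2$ the single equation $A\nabla S=0$ already pins the principal curvatures at any point of $\{\nabla S\neq0\}$ to $0$ and $2H$, hence $S\equiv 4H^2$ there, an immediate contradiction; this is the natural dimension-specific shortcut and avoids all of the Section 2 apparatus, while the paper's approach buys uniformity in $n$ and in the number of distinct principal curvatures. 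Your completion for constant $S$ is sound: for $c\le0$ the normal equation gives $S^2=nc(S+nH^2)\le0$, forcing $S=0$ and minimality (invoking Theorem 1.1 is equivalent and not circular); for $c>0$ the constancy of $(\lambda_1-\lambda_2)^2=2S-4H^2$ yields the dichotomy totally umbilic versus everywhere non-umbilic, the umbilic branch gives $H^2=2c$ and the small sphere (consistent with Corollary 1.7 for $n=2$), and the non-umbilic branch is correctly excluded via Codazzi (parallel principal frame, so $M^2$ flat), Gauss ($\lambda_1\lambda_2=-c$), and the factorization $(S-c)(S-2c)=0$. The only point to make explicit is that the $c>0$ conclusion concerns the proper (non-minimal) case, since any minimal surface in $\mathbb S^3$ is trivially a CMC triharmonic surface; your closing sentence reads the statement in this intended way.
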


Very recently, Chen-Guan investigated triharmonic CMC hypersurfaces in a space form $N^{n+1}(c)$ under some assumptions on the number of distinct principal curvatures in \cite{chenguang1, chenguang2}.

\begin{theorem} {\rm([6])}\label{ChenGuan}
Let $M^n$ {\rm($n\geq3$)} be a CMC proper triharmonic hypersurface with
at most three distinct principal curvatures in $N^{n+1}(c)$. Then $M^n$ has constant scalar curvature.
\end{theorem}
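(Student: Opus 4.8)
\emph{Proof proposal.} The plan is to convert the CMC triharmonic equation, together with the Codazzi and Gauss equations, into an overdetermined system of ordinary differential equations along a distinguished curve and to show that this forces $S:=\abs{A}^{2}$ to be constant, where $A$ is the shape operator of $M^{n}$ in $N^{n+1}(c)$; since the scalar curvature of $M^{n}$ equals $n(n-1)c+n^{2}H^{2}-S$ with $H$ the (constant, nonzero) mean curvature, this is exactly the assertion. I would begin from the decomposition of the triharmonic equation $\tau_{3}(\phi)=0$ for a hypersurface into a tangential and a normal part, as carried out by Montaldo--Oniciuc--Ratto \cite{Montaldo-Oniciuc-Ratto2022}. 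Because $H$ is a nonzero constant, the tangential part reduces to the statement that, wherever it is nonzero, $\operatorname{grad}S$ is a principal direction of $A$ with associated principal curvature a fixed constant $\lambda_{1}$ depending only on $n,H,c$; and the normal part becomes a second order equation $\Delta S=\Phi$, with $\Delta$ the Laplace--Beltrami operator and $\Phi$ a universal polynomial in $c$, $nH$ and the power sums $\operatorname{tr}(A^{2}),\operatorname{tr}(A^{3}),\operatorname{tr}(A^{4}),\operatorname{tr}(A^{5})$ of the principal curvatures.

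Suppose, for a contradiction, that $S$ is nonconstant. I would work on a connected component of the open set $U=\{\operatorname{grad}S\neq 0\}$ on which the number of distinct principal curvatures is locally constant, and put $e_{1}=\operatorname{grad}S/\abs{\operatorname{grad}S}$, so that $Ae_{1}=\lambda_{1}e_{1}$ with $\lambda_{1}$ constant. Here the hypothesis of at most three distinct principal curvatures enters. If there were at most two, then $\operatorname{tr}A=nH$ and $\lambda_{1}$ both being constant would force every principal curvature to be constant, hence $S$ constant on $U$ -- a contradiction; so on $U$ there are exactly three, say $\lambda_{1},\lambda_{2},\lambda_{3}$ with multiplicities $m_{1},m_{2},m_{3}$. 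From $m_{1}\lambda_{1}+m_{2}\lambda_{2}+m_{3}\lambda_{3}=nH$ and $m_{1}\lambda_{1}^{2}+m_{2}\lambda_{2}^{2}+m_{3}\lambda_{3}^{2}=S$, a system whose Jacobian in $(\lambda_{2},\lambda_{3})$ is $2m_{2}m_{3}(\lambda_{2}-\lambda_{3})\neq 0$, I would solve $\lambda_{2},\lambda_{3}$ locally as smooth functions of $S$ alone; equivalently $A$ obeys a cubic minimal polynomial with coefficients depending only on $S$, so each $\operatorname{tr}(A^{k})$ is a function of $S$ and the normal equation becomes a genuine second order ODE $\Delta S=G(S)$. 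Differentiating $S$ and $\operatorname{tr}A$ along any $e_{a}$ with $a\neq 1$, and using $e_{a}(\lambda_{1})=0$ together with $\lambda_{2}\neq\lambda_{3}$, I would deduce $e_{a}(\lambda_{2})=e_{a}(\lambda_{3})=0$; thus every principal curvature, and $S$, depends only on the arclength parameter $t$ along the integral curves of $e_{1}$.

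The analytic heart is then the Codazzi equation $(\nabla_{X}A)Y=(\nabla_{Y}A)X$, valid since $N^{n+1}(c)$ is a space form, expressed in a principal frame adapted to the splitting $TM=V_{1}\oplus V_{2}\oplus V_{3}$ into eigendistributions. From it I expect the relations $\langle\nabla_{e_{1}}e_{1},e_{j}\rangle=0$ and $\langle\nabla_{e_{j}}e_{j},e_{1}\rangle=\dfrac{e_{1}(\lambda_{j})}{\lambda_{j}-\lambda_{1}}$ for $e_{j}\in V_{2}\cup V_{3}$, together with the mixed relations linking the three eigendistributions, so that $\Delta S$ becomes $S''-\mathcal{H}S'$ with $\mathcal{H}=\dfrac{m_{2}\,e_{1}(\lambda_{2})}{\lambda_{2}-\lambda_{1}}+\dfrac{m_{3}\,e_{1}(\lambda_{3})}{\lambda_{3}-\lambda_{1}}$ (plus, when $m_{1}>1$, a further term coming from inside $V_{1}$); since $e_{1}(\lambda_{j})=\tfrac{d\lambda_{j}}{dS}\,S'$ this reads $S''-\psi(S)(S')^{2}=G(S)$ for an explicit $\psi$. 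This normal equation by itself is solved by the one-parameter family obtained by treating $(S')^{2}$ as a function of $S$, so it is not yet a contradiction; I would adjoin the Gauss equation, matching the sectional curvatures $\langle R^{M}(e_{1},e_{j})e_{j},e_{1}\rangle=c+\lambda_{1}\lambda_{j}$ with their expressions through the connection coefficients (and likewise for the remaining coordinate pairs), which yields additional first order ODEs for $\lambda_{2},\lambda_{3}$ in $t$. Since $\lambda_{2},\lambda_{3}$ are already algebraic functions of $S$ while $m_{1}\lambda_{1}+m_{2}\lambda_{2}+m_{3}\lambda_{3}=nH$ is constant, the combined system is heavily overdetermined, and I expect that eliminating variables forces $S'\equiv 0$, contradicting $U\neq\emptyset$; hence $S$, and the scalar curvature, is constant on $M^{n}$.

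The step I expect to be the main obstacle is exactly this final elimination: organizing the full set of Codazzi and Gauss identities for three eigendistributions of arbitrary multiplicities -- in particular the sub-case $m_{1}>1$, where the eigendistribution $V_{1}$ of the $\operatorname{grad}S$-curvature need not a priori be totally geodesic and the extra term in $\mathcal{H}$ must still be pinned down -- together with carrying out the reduction of the normal triharmonic equation to the one-variable ODE without losing any term (keeping track of $\operatorname{tr}(A^{3}),\dots,\operatorname{tr}(A^{5})$ is precisely where the bound on the number of principal curvatures is essential), and finally proving that the resulting overdetermined ODE system admits no nonconstant solution, which will probably require a judicious elimination or integrating-factor argument rather than brute computation. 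A more routine point is the passage from one component of $U$ to all of $M^{n}$ across the loci where $\operatorname{grad}S$ vanishes or the number of distinct principal curvatures drops, which should follow from the real-analyticity of all the quantities involved once $c$ and $H$ are fixed.
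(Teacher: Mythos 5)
First, note that the paper you were given does not prove this statement at all: Theorem \ref{ChenGuan} is quoted from Chen--Guan [6], and the present paper's own contribution (Lemmas \ref{lemma2}--\ref{lemma4} and Theorems \ref{mainth1}, \ref{mainth2}) proceeds by a different mechanism. Judged as a proof attempt, your outline has a genuine gap: everything hinges on the final step, where you ``expect that eliminating variables forces $S'\equiv 0$'' from an allegedly overdetermined system of Codazzi/Gauss ODEs, and you explicitly defer that elimination. That elimination \emph{is} the theorem; the preceding set-up (the frame with $e_{1}\parallel\nabla S$, solving $\lambda_{2},\lambda_{3}$ as functions of $S$ from the trace and $S$, the fact that everything depends on one variable, the connection coefficients $\langle\nabla_{e_{j}}e_{j},e_{1}\rangle=e_{1}(\lambda_{j})/(\lambda_{j}-\lambda_{1})$) is the standard preparation common to all papers in this area. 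Overdetermination by itself proves nothing here: the same algebraic system admits genuine solutions with $S$ constant (small hyperspheres and Clifford-type products in $\mathbb S^{n+1}$), so one must actually show that no \emph{nonconstant} $S$ survives, and in the known proofs this is a delicate, fully explicit computation (in [6] it occupies the bulk of the paper), not a formal consequence of counting equations. There are also two substantive inaccuracies in your reduction. For a CMC hypersurface the tangential equation is $HA\nabla S=0$, so on $\{\nabla S\neq 0\}$ the principal curvature in the direction $\nabla S$ is exactly $0$, not merely ``a fixed constant depending on $n,H,c$''; this vanishing is essential, since all the usable identities (e.g.\ those for $P_{\alpha}=e_{1}(\mu_{\alpha})/\mu_{\alpha}$ in Lemmas \ref{lemma2}--\ref{lemma3}) exploit $\lambda_{1}=0$. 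And the normal equation is ${\rm \Delta}\,S+S^{2}-ncS-n^{2}cH^{2}=0$, which involves only $S$, $c$, $nH$; no traces ${\rm tr}(A^{3}),\dots,{\rm tr}(A^{5})$ appear, so the place where you say the bound on the number of principal curvatures is ``essential'' is not where it actually enters.

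For comparison, the route this paper takes (for its stronger Theorems \ref{mainth1} and \ref{mainth2}, with no bound on the number of distinct principal curvatures when $c\neq 0$) is a closed-form elimination rather than the open-ended one you propose: from the Gauss equation one gets $e_{1}(P_{\alpha})=P_{\alpha}^{2}+c+\sum_{m\in I_{1}}\Gamma_{\alpha\alpha}^{m}\Gamma_{11}^{m}$, then the constancy of $H$ yields the recursion $(q+1)f(q+1)+cqf(q-1)=0$ for $f(q)=\sum_{\alpha\geq 2}n_{\alpha}\mu_{\alpha}P_{\alpha}^{q}$, and evaluating $f$ at a string of exponents produces Vandermonde systems in the $P_{\alpha}$ whose solvability forces $c=0$ or $\nabla S=0$. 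If you want to complete your own plan along the lines of [6], you would need to carry out the Codazzi/Gauss bookkeeping for the three eigendistributions (including $m_{1}>1$) and the resulting ODE analysis in full; as written, the proposal is a plausible plan but not a proof.
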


\begin{theorem} {\rm([7])}
Let $M^n$ {\rm($n\geq4$)} be a CMC proper triharmonic hypersurface with
four distinct principal curvatures in $N^{n+1}(c)$. If zero is a principal curvature
with multiplicity at most one, then $M^n$ has constant scalar curvature.
\end{theorem}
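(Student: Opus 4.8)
The plan is to show that $S:=|A|^{2}$, the squared norm of the second fundamental form, is constant; since $H$ is constant, the Gauss equation $\mathrm{Scal}=n(n-1)c+n^{2}H^{2}-S$ then yields constant scalar curvature. Suppose, to the contrary, that $U:=\{p\in M^{n}:(\operatorname{grad}S)(p)\neq0\}$ is non-empty. On a dense open subset $U_{0}\subseteq U$ the four distinct principal curvatures $\lambda_{1},\dots,\lambda_{4}$ are smooth with locally constant multiplicities $m_{1},\dots,m_{4}$, $\sum_{a}m_{a}=n$, and the CMC condition reads $\sum_{a}m_{a}\lambda_{a}=nH$, a constant.

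First I would record the reduced form of the triharmonic equation for a CMC hypersurface. Computing $\tau_{3}(\phi)$ and splitting it into components tangent and normal to $M^{n}$ — using $\operatorname{div}A=\operatorname{grad}(\operatorname{tr}A)=0$ and that $N^{n+1}(c)$ has constant sectional curvature — one is left with a tangential equation forcing $\operatorname{grad}S$ to be a principal direction of $A$, say $A(\operatorname{grad}S)=\varrho\operatorname{grad}S$ with $\varrho$ necessarily one of the $\lambda_{a}$ (otherwise $\operatorname{grad}S=0$), and a normal equation of the shape $\Delta S=\mathcal P(S)$, where $\mathcal P$ is a fixed quadratic polynomial with coefficients depending only on $n$, $c$, $H$. (For the bienergy the analogous reduction is simply $S=nc$.) In particular $\operatorname{grad}S$ lies in a single eigendistribution; relabel so that it is the $\lambda_{1}$-eigendistribution $\mathcal D_{1}$.

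Next I would invoke the Codazzi equations. In a local orthonormal frame of principal directions, with connection coefficients $\Gamma_{ij}^{k}$, Codazzi yields the standard relations; in particular, whenever $m_{a}\geq2$ one has $E(\lambda_{a})=0$ for every $E$ in the $\lambda_{a}$-eigendistribution, so $\operatorname{grad}\lambda_{a}$ is orthogonal to it. Combining this with $\operatorname{grad}S=2\sum_{a}m_{a}\lambda_{a}\operatorname{grad}\lambda_{a}$, with the CMC identity $\sum_{a}m_{a}\operatorname{grad}\lambda_{a}=0$, and with $\operatorname{grad}S\in\mathcal D_{1}$, I expect to deduce that each $\operatorname{grad}\lambda_{a}$ is a function multiple of the nowhere-zero unit field $X:=\operatorname{grad}S/|\operatorname{grad}S|$ on $U_{0}$; equivalently, the curvatures and the relevant $\Gamma_{ij}^{k}$ become functions of the arclength $s$ along the integral curves of $X$. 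The normal equation $\Delta S=\mathcal P(S)$ together with the Codazzi relations then collapses, along such a curve, to a system of ordinary differential equations for $\lambda_{1}(s),\dots,\lambda_{4}(s)$ subject to $\sum_{a}m_{a}\lambda_{a}=nH$.

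The hard part — and where the hypothesis that $0$ is a principal curvature of multiplicity at most one enters — is to derive a contradiction from this system. Using the CMC constraint to eliminate one curvature, expressing the surviving connection coefficients through the $\lambda_{a}$ via the Codazzi relations (those of the type $E_{k}(\lambda_{i})=(\lambda_{k}-\lambda_{i})\Gamma_{ik}^{i}$ and their analogues along the distribution transverse to $X$), and substituting into $\Delta S=\mathcal P(S)$, one gets a second-order expression for $S$ along the curve; matching it with the expression obtained by differentiating the algebraic relations among the $\lambda_{a}$ forces a polynomial identity in $S$ all of whose coefficients must vanish. A case analysis — organized by which, if any, $\lambda_{a}$ equals zero and whether $\operatorname{grad}S$ sits in that curvature's eigendistribution — should then show the identity is untenable unless $\operatorname{grad}S\equiv0$. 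When $0$ occurs with multiplicity $\geq2$, the extra vanishing $E(\lambda_{a})=0$ on the $(\geq2)$-dimensional eigendistribution of $\lambda_{a}=0$ kills too many terms of this identity for the elimination to close, so that branch is precisely what the hypothesis excludes. With the contradiction on $U_{0}$ in hand, $U$ is empty, $S$ is constant, and $M^{n}$ has constant scalar curvature. I expect essentially all of the difficulty to lie in this final elimination: organizing the case analysis and controlling the resulting polynomial identities of fairly high degree.
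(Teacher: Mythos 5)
Your outline stops short of the actual proof at the two places where the real work happens. First, the reduced tangential equation for a CMC proper triharmonic hypersurface is not merely ``$\nabla S$ is an eigenvector of $A$ for some eigenvalue $\varrho$'': it is $HA\nabla S=0$, hence $A\nabla S=0$ on the set $\mathcal N=\{\nabla S\neq0\}$. So on $\mathcal N$ zero \emph{is} a principal curvature and $\nabla S$ lies in its eigendistribution; combined with the hypothesis that this multiplicity is at most one, the frame is completely pinned down ($e_1\parallel\nabla S$, $\lambda_1=0$, $e_i(S)=0$ for $i\geq2$). Your closing ``case analysis organized by which, if any, $\lambda_a$ equals zero and whether $\nabla S$ sits in that eigendistribution'' therefore addresses branches that cannot occur, and it suggests the structural input of the tangential equation was not fully used. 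Second, your key intermediate claim --- that every $\nabla\lambda_a$ is a multiple of $X=\nabla S/|\nabla S|$, so that everything reduces to an ODE system in the arclength $s$ --- is not justified and does not follow from what you cite. Codazzi gives $E(\lambda_a)=0$ only for $E$ in the $\lambda_a$-eigendistribution when $m_a\geq2$; for a direction $e_j$ lying in an eigendistribution of a multiplicity-one curvature, the only available constraints are $\sum_a m_a e_j(\lambda_a)=0$ and $\sum_a m_a\lambda_a e_j(\lambda_a)=0$, two linear relations on three (or more) unknowns, which do not force $e_j(\lambda_a)=0$. The published arguments do not need or prove this reduction; instead they only differentiate along $e_1$ (and along the null eigendistribution), controlling the unavoidable terms $e_m(\mu_\alpha)$ through identities for the connection coefficients.

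Third, and decisively, the contradiction itself --- the entire content of the theorem --- is left as the expectation that ``a polynomial identity in $S$ is untenable.'' The mechanism that actually closes the argument (both in Chen--Guan and, in more general form, in this paper, whose Theorems 1.5--1.7 subsume the cited statement) is concrete: setting $P_\alpha=e_1(\mu_\alpha)/\mu_\alpha$, the Gauss--Codazzi identities yield $e_1(P_\alpha)=P_\alpha^2+c+\sum_{m\in I_1}\Gamma^m_{\alpha\alpha}\Gamma^m_{11}$ and then the closed-form evaluation $\sum_\alpha n_\alpha\mu_\alpha P_\alpha^q=0$ for odd $q$ and $=\frac{(q-1)!!}{q!!}(-c)^{q/2}nH$ for even $q$ (Lemmas 2.3--2.4 here); Vandermonde-type determinant arguments applied to this family force the $P_\alpha$ (or $P_\alpha^2$) to coincide, after which the first few identities give incompatible values such as $P^2=-\tfrac12 c$ and $P^4=\tfrac38c^2$ when $c\neq0$, while for $c=0$ one must bring in the normal equation $\Delta S+S^2=0$ (Lemma 2.5) and exploit the multiplicity-one hypothesis on the zero curvature to eliminate the residual $\Gamma^1_{mm}$ terms before differentiating again to reach $\mu=\mathrm{const}$. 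Your proposal contains neither these identities nor any substitute for them, and your stated guess for where the multiplicity hypothesis enters (that multiplicity $\geq2$ ``kills too many terms'') is the opposite of what happens: extra null directions \emph{add} uncontrolled connection-coefficient terms rather than removing them. As it stands, the proposal is a plausible opening (restrict to $\mathcal N$, principal frame, Codazzi, reduced triharmonic system) followed by a gap where the proof should be.
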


We recall {\em Chen's conjecture} in the literature of biharmonic submanifolds: {\em any biharmonic submanifold in the Euclidean space $\mathbb R^{n+1}$ is minimal}.
There are some important progress in recent years to support the conjecture under some geometric restrictions (see, for instances \cite{Fu-Yang-Zhan2021, Ou-Chen-book2020}), however the general case remains open. Taking into account Chen's conjecture, Maeta \cite{Maeta2012PAMS} further proposed the generalized Chen's conjecture on $k$-harmonic submanifolds:\\

{\bf Conjecture }: Any $k$-harmonic submanifold in the Euclidean space $\mathbb R^{n+1}$ is minimal.\\

In this paper, we are able to determine the geometry of CMC triharmonic hypersurfaces in a space form $N^{n+1}(c)$ without the restrictions on the number of principal curvatures. We will prove the following statements:

\begin{theorem} \label{mainth1}
Let $M^n$  be a CMC triharmonic
hypersurface in the  hyperbolic space $\mathbb H^{n+1}$. Then $M^n$ is minimal.
\end{theorem}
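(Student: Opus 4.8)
The plan is to combine the triharmonic equation $\tau_3(\phi)=0$ with the CMC hypothesis to obtain a second order elliptic equation for $S:=|A|^2$ (the squared norm of the second fundamental form) and then to conclude by a maximum principle. Since $E_3(\phi)=\tfrac12\int_M|(d+d^{*})^3\phi|^2\,v_g=\tfrac12\int_M|\bar\nabla\tau(\phi)|^2\,v_g$, computing the first variation gives, for a map into $N^{n+1}(c)$,
\[
\tau_3(\phi)=\bar\Delta^2\tau(\phi)-\operatorname{tr}_g\bar R^N\!\big(\bar\Delta\tau(\phi),d\phi\big)d\phi+\sum_i\bar R^N\!\big(\tau(\phi),\bar\nabla_{e_i}\tau(\phi)\big)d\phi(e_i),
\]
$\bar\Delta$ being the rough Laplacian on $\phi^{*}TN$ (the terms involving $\bar\nabla\bar R^N$ vanish since $N$ has constant curvature). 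For a hypersurface with unit normal $\xi$ we have $\tau(\phi)=nH\xi$; using $H$ constant, the Weingarten formula $\bar\nabla_X\xi=-AX$ and the Codazzi consequence $\sum_i(\nabla_{e_i}A)e_i=n\nabla H=0$, one computes $\bar\Delta\tau=nHS\,\xi$ and $\bar\Delta(S\xi)=(-\Delta S+S^2)\xi+2A(\nabla S)$, where $\Delta$ is the Laplace--Beltrami operator. Substituting and splitting $\tau_3(\phi)=0$ into normal and tangential parts gives
\[
A(\nabla S)=0\qquad\text{and}\qquad \Delta S=S^2-cn\,S-cn^2H^2,
\]
the second provided $H\neq0$ (the situation we must exclude). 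In $\mathbb H^{n+1}$, i.e. $c=-1$, the normal equation reads $\Delta S=S^2+nS+n^2H^2$. As a check it has no constant solution with $H\neq0$, consistent with the absence of proper triharmonic totally umbilical hypersurfaces of $\mathbb H^{n+1}$, while the same computation for $c=1$ returns the extrinsic radius $1/\sqrt3$ of the proper triharmonic small hypersphere of $\mathbb S^{n+1}$.

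Now suppose $M^n$ is not minimal. Then $A\not\equiv0$ and, by $(\operatorname{tr}A)^2\le n\operatorname{tr}(A^2)$, $S=|A|^2\ge nH^2>0$ everywhere, so the normal equation gives the pointwise inequality $\Delta S\ge S^2>0$ on $M$. If $M$ is compact this is impossible, since $\Delta S\le 0$ at an interior maximum of $S$ (this recovers Maeta's compact result). For general complete $M$ I would invoke a Keller--Osserman / maximum-principle-at-infinity theorem: as $f(t)=t^2$ satisfies $\int_1^{\infty}\big(\int_0^t f\big)^{-1/2}dt<\infty$, the inequality $\Delta S\ge S^2$ cannot admit a positive solution on $M$ once the Ricci curvature of $M$ is controlled from below along a geodesic exhaustion. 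That control is furnished by the ambient geometry: the Gauss equation in $\mathbb H^{n+1}$ gives $\operatorname{Ric}_M\ge -C_n(1+S)$, and the tangential equation $A(\nabla S)=0$ shows $0$ is a principal curvature in the direction $\nabla S$, whence $\operatorname{Ric}_M(\nabla S,\nabla S)=-(n-1)|\nabla S|^2$. Combining these with the elementary bound $|\operatorname{tr}(A^3)|\le S^{3/2}$ and a self-improving a priori estimate for $\sup_{B_r}S$ should force $H=0$, i.e. $M$ is minimal.

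The computation of $\tau_3$ and its reduction to the two displayed equations is routine if tedious; the heart of the matter is the last step for non-compact $M$. The bare inequality $\Delta S\ge S^2$ does not by itself make $S$ constant on an arbitrary complete manifold --- it can be sustained, e.g., on manifolds of exponential volume growth --- so one must genuinely use that $M$ lies in $\mathbb H^{n+1}$, coupling the equation $\Delta S=S^2+nS+n^2H^2$ to the Gauss bound $\operatorname{Ric}_M\ge-C_n(1+S)$. Turning this coupling into a rigorous argument --- running an a priori estimate that simultaneously bounds $S$ and $\operatorname{Ric}_M$ on geodesic balls so that the Keller--Osserman comparison (or an Omori--Yau argument on a bounded function of $S$) applies --- is where the real work lies, and where the sign $c<0$ enters decisively: for $c>0$ the analogue $\Delta S=S^2-nS-n^2H^2$ does admit the non-minimal small hypersphere, so in $\mathbb S^{n+1}$ one can only expect $S$ constant, not minimality, as in the main theorem's sphere statement.
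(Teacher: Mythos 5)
There is a genuine gap. Your reduction to the two equations $A(\nabla S)=0$ and $\Delta S=S^{2}-cnS-cn^{2}H^{2}$ is fine (this is the known characterization, Proposition 2.1 in the paper, quoted from Montaldo--Oniciuc--Ratto), and the compact case via the maximum principle does recover Maeta's result. But the theorem has \emph{no} compactness or completeness hypothesis: it is a local statement about an arbitrary CMC triharmonic hypersurface, possibly a small open piece. Any argument built on a maximum principle at infinity (Keller--Osserman, Omori--Yau) can at best handle complete $M$, so even if your sketch were completed it would not prove the stated theorem. Moreover, the analytic step you yourself flag as ``where the real work lies'' is genuinely unresolved: the Gauss equation only gives $\operatorname{Ric}_M\ge -C_n(1+S)$, a lower bound depending on the very function $S$ you are trying to control, so the Keller--Osserman comparison does not apply without an a priori bound on $S$ --- the argument as proposed is circular, and you correctly note that $\Delta S\ge S^{2}$ alone is not contradictory on a general complete manifold.

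The paper avoids global analysis entirely. On the set $\mathcal N=\{\nabla S\neq0\}$ one uses $A(\nabla S)=0$ to choose a frame with $e_1\parallel\nabla S$ and $\lambda_1=0$, sets $P_\alpha=e_1(\mu_\alpha)/\mu_\alpha$ for the nonzero principal curvatures, and derives from the Gauss--Codazzi structure equations the identities $\sum_\alpha n_\alpha\mu_\alpha P_\alpha^{q}=0$ for odd $q$ and $=\frac{(q-1)!!}{q!!}(-c)^{q/2}nH$ for even $q$ (Lemma 2.4). Feeding these into Vandermonde-type determinant arguments forces, at any point of $\mathcal N$, relations such as $P^{2}=-\tfrac12 c$ and $P^{4}=\tfrac38 c^{2}$, hence $c=0$, a contradiction when $c\neq0$. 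Thus $\mathcal N=\emptyset$ and $S$ is constant on every such hypersurface, locally and with no completeness needed; then $\Delta S=0$ turns the first triharmonic equation into $S^{2}-ncS-n^{2}cH^{2}=0$, which for $c=-1$ is a sum of nonnegative terms with $n^{2}H^{2}>0$ unless $H=0$, giving minimality. If you want to salvage your route, you would need either to add a completeness hypothesis (weakening the theorem) and genuinely carry out the coupled a priori estimate, or to switch to a pointwise/algebraic argument of the paper's kind that exploits $A(\nabla S)=0$ and the structure equations rather than the sign of $\Delta S$.
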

Moreover, restricting ourselves on the case of $c>0$, we get
\begin{theorem} \label{mainth2}
Let $M^n$  be a CMC proper triharmonic
hypersurface in the sphere $\mathbb S^{n+1}$. Then $M^n$ has constant scalar curvature.
\end{theorem}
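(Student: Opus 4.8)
The plan is to exploit the triharmonic equation for a CMC hypersurface in $\mathbb S^{n+1}$ together with a careful analysis of the Laplacian of the squared norm $S = \abs{A}^2$ of the shape operator $A$. First I would write out the decomposition of the triharmonic system into its tangential and normal parts for a CMC hypersurface: since $H$ is constant, the normal component of $\tau_3(\phi)\equiv 0$ becomes a scalar PDE of the form $\Delta^2 f + (\text{lower-order terms involving } S, \abs{\nabla A}^2, \text{ and curvature}) = 0$ where $f$ relates to $S$ or $\Delta S$, and the tangential component gives a first-order relation that, for CMC, typically forces $A(\nabla S) = \lambda \nabla S$ for an explicit function $\lambda$. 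The key structural fact is that the problem reduces to showing $S$ is constant; once that is known, the Gauss equation gives constant scalar curvature immediately since $n$, $H$, $c$ are all constant.

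The core of the argument would then proceed by studying the connected components of the open set $\mathcal{U} = \{\, p \in M^n : \nabla S(p) \neq 0 \,\}$ and deriving a contradiction, so that $\mathcal{U} = \emptyset$. On $\mathcal{U}$, I would diagonalize $A$ compatibly with the eigendirection picked out by $\nabla S$: the tangential triharmonic relation forces $\nabla S$ to be an eigenvector of $A$, say $A(\nabla S) = \mu_1 \nabla S$, and one obtains that $\nabla S / \abs{\nabla S}$ is a globally defined principal direction with controlled behaviour. Then I would integrate the normal scalar equation against suitable test functions, or equivalently compute $\Delta S$ and $\Delta |\nabla S|^2$ and feed them back, using the Bochner formula and the Codazzi equations to express everything in terms of the principal curvatures and their derivatives. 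The aim is to show that the resulting ODE system along the integral curves of $\nabla S$ has no nonconstant solution consistent with the compactness or completeness of $M^n$ — or, if $M^n$ is not assumed complete, that the algebraic constraints alone are incompatible with $\nabla S \neq 0$.

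The main obstacle, I expect, will be handling the case $c>0$ where $M^n$ need not be minimal: the function $f$ in the normal equation does not vanish, so one cannot simply conclude $S \equiv 0$ as in the hyperbolic case of Theorem~\ref{mainth1}. Instead one must show the weaker statement that $S$ is constant, which requires extracting enough from the fourth-order equation to pin down $\Delta S$ and $\abs{\nabla S}^2$ simultaneously. Concretely, the hard step is combining the two scalar identities — one coming from $\Delta S$ computed via Simons-type formulas, the other from the triharmonic normal component — to obtain a closed relation of the form $\abs{\nabla S}^2 = P(S)$ and $\Delta S = Q(S)$ with $P, Q$ explicit polynomials, and then ruling out nonconstant solutions. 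This typically needs a clever integral identity (e.g. integrating $\operatorname{div}(\,\cdot\,)$ of a vector field built from $\nabla S$ and powers of $S$) or a maximum-principle argument at the extrema of $S$, and getting the signs to work out in the $c>0$ case is the delicate point.

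\begin{proof}[Proof sketch]
This outline will be filled in below; we first record the triharmonic equations for CMC hypersurfaces, then analyze the open set where $\nabla S \neq 0$, and finally derive the contradiction that forces $S$ to be constant, whence the Gauss equation yields constant scalar curvature.
\end{proof}
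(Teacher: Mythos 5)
Your outline misses the mechanism that actually makes this theorem work, and the substitute you propose would not close the argument. First, two inaccuracies in the setup: for a CMC proper triharmonic hypersurface the tangential equation is not merely $A\nabla S=\lambda\nabla S$ for some function $\lambda$ --- it is $A\nabla S=0$ (second equation of \eqref{triharmonic condition}), so on $\mathcal N=\{\nabla S\neq0\}$ the unit vector $e_1=\nabla S/|\nabla S|$ is a principal direction with principal curvature \emph{exactly zero}; this exact vanishing is used throughout the computation and cannot be left unspecified. Likewise the normal equation for CMC is the second--order scalar identity $\Delta S+S^2-ncS-n^2cH^2=0$, not a fourth--order equation in an auxiliary $f$, so there is no ``fourth-order equation'' from which to extract more information than this single relation.

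Second, and more seriously, your core step --- producing a closed relation $|\nabla S|^2=P(S)$, $\Delta S=Q(S)$ and then ruling out nonconstant solutions by integral identities or a maximum principle at extrema of $S$ --- is not available here: the theorem assumes neither compactness nor completeness, so integration over $M$ and the existence of extrema of $S$ cannot be invoked, and when $M$ has several distinct principal curvatures there is no reason $|\nabla S|^2$ is a function of $S$ alone; the unknowns are the individual curvatures $\mu_\alpha$ and their logarithmic derivatives $P_\alpha=e_1(\mu_\alpha)/\mu_\alpha$ along $e_1$. The paper's proof is instead purely pointwise: from the curvature identities $R_{1i1i}=c$, $R_{iji1}=0$ and the constancy of $H$ one derives recursively the infinite family of identities $\sum_\alpha n_\alpha\mu_\alpha P_\alpha^{q}=0$ for odd $q$ and $\sum_\alpha n_\alpha\mu_\alpha P_\alpha^{q}=\frac{(q-1)!!}{q!!}(-c)^{q/2}nH$ for even $q$ (Lemma \ref{lemma3}); the odd-$q$ identities form a Vandermonde-type homogeneous system forcing, in each case analysis on $\mathcal N$, all nonzero $P_\alpha^2$ to coincide, and feeding this into two consecutive even-$q$ identities yields incompatible values of $P^2$ (e.g.\ $P^2=-\tfrac12 c$ versus $P^4=\tfrac38c^2$), a contradiction at every point of $\mathcal N$, whence $S$ is constant and \eqref{scar} gives constant scalar curvature. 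This recursive family of identities and the Vandermonde argument are the essential content of the proof, and nothing in your sketch supplies them or an equivalent local device; as written the proposal is a plan, not a proof, and its proposed global tools do not apply under the hypotheses of the theorem.
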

Combining our Theorem \ref{mainth2} with the results of Montaldo-Oniciuc-Ratto (Theorem 1.9, \cite{Montaldo-Oniciuc-Ratto2022}) and Chen-Guan (Corollary 1.7, \cite{chenguang2}), we provide a more general result for CMC triharmonic
hypersurfaces in $\mathbb S^{n+1}$.
\begin{corollary}
Let $M^n$  be a CMC proper triharmonic
hypersurface in the sphere $\mathbb S^{n+1}$. Then either
\newline {\rm (1)}\,\,\,$H^2=2$ and $M^n$ is an open part of $S^n(1/\sqrt 3)$, or
\newline {\rm(2)}
$H^2\in(0, t_0]$ and $H^2=t_0$ if and only $M^n$ is an open part of $S^{n-1}(a)\times S^1(\sqrt{1-a^2})$, where $a$ is given by
$$a^2=\frac{2(n-1)^2}{n^2H^2+2n(n-1)+nH\sqrt{n^2H^2+4(n-1)}}$$
and  $t_0$ is the unique real root belonging to $(0, 2)$ of the polynomial
$$f_{n}=n^4t^3-2n^2(n^2-5n+5)t^2-(n-1)(2n-5)(3n-5)t-(n-1)(n-2)^2.$$
\end{corollary}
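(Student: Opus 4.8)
The strategy is to deduce the corollary from Theorem~\ref{mainth2} together with the two classification results quoted above, so that almost no new computation is required. First recall that for a hypersurface $M^n\subset\mathbb S^{n+1}$ the Gauss equation expresses the scalar curvature as
$$\mathrm{Scal}_{M}=n(n-1)+n^2H^2-S,$$
where $S=|A|^2$ is the squared norm of the second fundamental form. Since $M^n$ is assumed to be CMC and is proper triharmonic in $\mathbb S^{n+1}$, Theorem~\ref{mainth2} guarantees that $\mathrm{Scal}_{M}$ is constant; substituting into the identity above and using that $H$ is constant forces $S$ to be constant as well. Thus $M^n$ is a CMC proper triharmonic hypersurface of $\mathbb S^{n+1}$ with constant $S$, which falls exactly under the hypotheses of Theorem~1.9 of Montaldo--Oniciuc--Ratto \cite{Montaldo-Oniciuc-Ratto2022} and of Corollary~1.7 of Chen--Guan \cite{chenguang2}.

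Next I would apply those two results. With $H$ and $S$ both constant, what is left of the triharmonic equation is algebraic in the symmetric functions of the principal curvatures, and this is precisely the input exploited in \cite{Montaldo-Oniciuc-Ratto2022} and \cite{chenguang2}. Combining their conclusions with the Cauchy--Schwarz bound $S\ge nH^2$, in which equality holds if and only if $M^n$ is totally umbilical, yields the desired dichotomy: in the umbilical case one is forced to $H^2=2$ and $M^n$ is an open part of the small hypersphere $S^n(1/\sqrt3)$; in the non-umbilical case $H^2$ is confined to an interval $(0,t_0]$, with $H^2=t_0$ attained exactly when $M^n$ is an open part of the generalized Clifford torus $S^{n-1}(a)\times S^1(\sqrt{1-a^2})$. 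Inserting the two principal curvatures of this torus --- of multiplicities $n-1$ and $1$ --- into the algebraic relation and eliminating the radius via the expressions for $nH$ and $S$ in terms of those curvatures produces the cubic $f_n(H^2)=0$ together with the displayed formula for $a^2$; this computation is the one carried out in the cited papers.

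It finally remains to verify that $t_0$ is well defined, that is, that $f_n$ has a unique root in $(0,2)$. For $n\ge 3$ one computes $f_n(0)=-(n-1)(n-2)^2<0$ and $f_n(2)=27\big(n^3+n^2-4n+2\big)>0$, so the intermediate value theorem produces at least one root of $f_n$ in $(0,2)$; moreover the sequence of coefficients of $f_n$ exhibits exactly one sign change for every $n\ge 3$ (it is $+,+,-,-$ when $n=3$ and $+,-,-,-$ when $n\ge 4$), so Descartes' rule of signs shows that $f_n$ has at most one positive root. Hence $t_0\in(0,2)$ is unique, and a direct substitution confirms that the displayed value of $a^2$ lies in $(0,1)$ and indeed corresponds to the extremal torus. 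The only genuinely delicate step is this last piece of polynomial bookkeeping, uniform in $n$; everything of geometric substance has already been supplied by Theorem~\ref{mainth2} and by the results of \cite{Montaldo-Oniciuc-Ratto2022} and \cite{chenguang2}.
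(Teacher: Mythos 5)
Your proposal follows exactly the route the paper itself takes: the paper offers no separate proof of this corollary, obtaining it by noting that Theorem \ref{mainth2} (constant scalar curvature, hence constant $S$ by the Gauss equation since $H$ is constant) places $M^n$ under the hypotheses of Theorem 1.9 of Montaldo--Oniciuc--Ratto and Corollary 1.7 of Chen--Guan, whose classifications give the stated dichotomy. Your additional checks (e.g.\ $f_n(0)<0$, $f_n(2)=27(n^3+n^2-4n+2)>0$, one sign change in the coefficients) are correct and only confirm details already contained in the cited results.
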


Let us recall {\em the generalized Chern conjecture} (c.f.\cite{Chang1993,
Chang1994}), which says that:
{\em any closed hypersurface in the unit sphere $\mathbb
S^{n+1}$ with constant mean curvature and constant scalar curvature is isoparametric}. Since the
class of CMC proper triharmonic hypersurfaces in a sphere have constant scalar curvature, the next important problem is to study whether these hypersurfaces are isoparametric. The problem remains open in its full generality. The readers may refer to the recent important progress on the generalized Chern conjecture due to Tang and Yan et al. \cite{Tang-Wei-Yan2020, Tang-Yan2020}.\\

Considering the case $c=0$, we obtain a characterization under an assumption on the multiplicity of zero principal curvature.
\begin{theorem} \label{mainth3}
Let $M^n$  be a CMC triharmonic
hypersurface in the Euclidean space $\mathbb R^{n+1}$. If zero is a principal curvature
with multiplicity at most one, then $M^n$ is minimal.
\end{theorem}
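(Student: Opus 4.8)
\emph{Proof plan.} The plan is to argue by contradiction: assume $M^n$ is not minimal, so, being CMC, it carries a nonzero constant mean curvature $H$. The first step is to specialize the triharmonic equation to the flat ambient. Since in $\mathbb R^{n+1}$ triharmonicity is the equation $\Delta^2\vec H=0$ ($\Delta$ being the Laplacian of $M$ acting componentwise on the $\mathbb R^{n+1}$-valued mean curvature vector $\vec H$), and since for a CMC hypersurface $\Delta\nu=-|A|^2\nu$, one computes that $\Delta\vec H$ is a constant multiple of $|A|^2\nu$ and, using $\Delta(|A|^2\nu)=(\Delta|A|^2-|A|^4)\nu-2A(\nabla|A|^2)$, that $\Delta^2\vec H=0$ is equivalent, in the non-minimal CMC case, to
\[
\Delta S=S^2,\qquad A(\nabla S)=0,\qquad S:=|A|^2
\]
(the $c=0$ specialization of the general CMC triharmonic system). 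Note $S\ge nH^2>0$ everywhere. In particular $S$ cannot be constant, for otherwise $0=\Delta S=S^2$ would give $S\equiv0$, contradicting $S\ge nH^2>0$.

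Hence the open set $\mathcal U=\{\,p\in M:\nabla S(p)\neq0\,\}$ is nonempty. On $\mathcal U$, the relation $A(\nabla S)=0$ together with $\nabla S\neq0$ forces $0$ to be a principal curvature whose eigenspace contains $\nabla S$; the hypothesis that the zero principal curvature has multiplicity at most one then makes this eigenspace the line $\mathbb R\,E_n$, where $E_n:=\nabla S/|\nabla S|$, with the remaining principal curvatures $\lambda_1,\dots,\lambda_{n-1}$ nowhere zero on $\mathcal U$. This is exactly where the multiplicity assumption enters: it allows us to choose on $\mathcal U$ a \emph{smooth} orthonormal frame $E_1,\dots,E_{n-1},E_n$ of principal directions. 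From the Codazzi equations, $\operatorname{tr}A=nH$ constant and $\lambda_n\equiv0$ one then obtains the structural facts on $\mathcal U$: $\nabla_{E_n}E_n=0$ (the integral curves of $E_n$ are geodesics of $M$), $E_n\lambda_i=-\lambda_i\Gamma^i_{in}$, and, since $\nabla S$ is parallel to $E_n$, along each such geodesic $\gamma(t)$ the function $S$ is a strictly increasing function of $t$ ($S'=|\nabla S|>0$) satisfying
\[
S''+(\operatorname{div}E_n)\,S'=S^2 .
\]
Further relations follow from differentiating $A(\nabla S)=0$, and the Simons-type identity $\tfrac12\Delta S=|\nabla A|^2-S^2+nH\operatorname{tr}(A^3)$ combined with $\Delta S=S^2$ gives $|\nabla A|^2=\tfrac32 S^2-nH\operatorname{tr}(A^3)$.

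Next I would rule out a non-constant $S$ in two regimes. If $S$ is bounded on $M$, then $|A|$ is bounded, the Ricci curvature of $M$ is bounded below by the Gauss equation, and the Omori--Yau maximum principle yields $p_k\in M$ with $S(p_k)\to\sup_M S$ and $\limsup_k\Delta S(p_k)\le 0$; but $\Delta S(p_k)=S(p_k)^2\to(\sup_M S)^2\ge n^2H^4>0$, a contradiction. If $S$ is unbounded, I would start at a point of $\mathcal U$ where $S$ is already large and follow the $E_n$-geodesic forward, along which $S$ increases: using the Codazzi and Gauss relations in the principal frame to bound the transverse term, one brings $S''+(\operatorname{div}E_n)S'=S^2$ into the form $S''\gtrsim S^2$ for $S$ large, whence $S'\gtrsim S^{3/2}$ and $\int^{\infty}S^{-3/2}\,dS<\infty$, so $S$ blows up within a finite arclength --- impossible on a complete $M$. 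Either way $\mathcal U=\varnothing$, so $S$ is constant, so $S\equiv0$ by the first step, i.e.\ $M^n$ is totally geodesic and in particular minimal, contradicting $H\neq0$. Therefore $M^n$ is minimal.

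The main obstacle is the estimate of $\operatorname{div}E_n$ in the unbounded regime, i.e.\ showing that the transverse geometry of the level sets of $S$ (equivalently, the behaviour of $\prod_{i\le n-1}|\lambda_i|$ along the $E_n$-geodesics) cannot conspire to block the finite-distance blow-up. Carrying this out requires a careful bookkeeping of the full Codazzi and Gauss system in the frame $\{E_i\}$, available only because the multiplicity hypothesis keeps that frame smooth; it is the Euclidean analogue of the corresponding step in the proofs of Theorems \ref{mainth1} and \ref{mainth2}. If one instead shows directly that $S$ is constant, minimality is immediate as above, or follows from the theorem of Montaldo--Oniciuc--Ratto quoted in the introduction applied with $c=0$.
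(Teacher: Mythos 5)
Your reduction of triharmonicity to the system $\Delta S=S^{2}$ (up to the sign convention for the Laplacian), $A\nabla S=0$, and the observation that a constant $S$ would force $S\equiv0$ and hence minimality, agree with the paper. But the heart of your argument --- ruling out non-constant $S$ --- has two genuine gaps. First, both of your regimes are global: the Omori--Yau maximum principle in the bounded case and the finite-arclength blow-up of the Riccati-type inequality in the unbounded case require $M^{n}$ to be complete (and, in the second case, that the $E_{n}$-curve stays inside $\mathcal U=\{\nabla S\neq0\}$ for all forward time), whereas the theorem assumes no completeness at all; the paper's proof is purely local, carried out pointwise on $\mathcal N=\{\nabla S\neq0\}$. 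Second, even granting completeness, the step you yourself label ``the main obstacle'' --- bounding the transverse term $(\operatorname{div}E_{n})\,S'$ so as to get $S''\gtrsim S^{2}$ --- is exactly where all of the work lies, and you give no argument for it: $\operatorname{div}E_{n}$ is built from the same quantities $P_{\alpha}=e_{1}(\mu_{\alpha})/\mu_{\alpha}$ whose behaviour is the whole issue, and nothing a priori prevents it from being large and negative. As it stands the proposal proves nothing beyond the (already known) compact/complete cases, and not even those without the missing estimate.

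For comparison, the paper's route avoids all global analysis. From Gauss--Codazzi it derives (Lemma \ref{lemma3} with $c=0$) the identities $\sum_{\alpha\geq2}n_{\alpha}\mu_{\alpha}P_{\alpha}^{\,q}=0$ for every $q\geq1$; a Vandermonde determinant argument then shows that at each point of $\mathcal N$ the nonzero $P_{\alpha}$ are all equal to a common value $P$, and one is left with three cases: all $P_{\alpha}$ vanish (contradicting $\nabla S\neq0$), all are nonzero (then $nHP=0$, a contradiction), or a mixed case in which the nonzero block satisfies $\sum n_{\alpha}\mu_{\alpha}=0$. Only in this last case does the hypothesis on the zero principal curvature enter: with $I_{1}=\{1\}$ the term $\sum_{m\in I_{1}}\Gamma^{1}_{mm}$ in the rewritten equation $\Delta S+S^{2}=0$ (Lemma \ref{lemma4}) disappears, and differentiating the resulting relation along $e_{1}$ forces $\mu_{\alpha}=0$ on the zero block, contradicting $\sum_{\alpha=2}^{r}n_{\alpha}\mu_{\alpha}=nH\neq0$. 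In your sketch the multiplicity hypothesis is used only to obtain a smooth principal frame, which is a sign that your route does not engage with the actual difficulty; to repair the proposal you would either have to add completeness to the statement and genuinely prove the $\operatorname{div}E_{n}$ estimate, or switch to the local algebraic scheme just described.
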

In particular, we can prove
\begin{theorem} \label{mainth4}
Any CMC triharmonic
hypersurface in the Euclidean space $\mathbb R^{6}$ is minimal.
\end{theorem}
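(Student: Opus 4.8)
The plan is to reduce Theorem~\ref{mainth4} to Theorem~\ref{mainth3}, Theorem~\ref{ChenGuan} and the Montaldo--Oniciuc--Ratto theorem on hypersurfaces with $|A|^{2}$ constant (quoted in the introduction), and then to dispose of one rigid configuration by a direct computation with the triharmonic system. Let $M^{5}\subset\mathbb R^{6}$ be a CMC triharmonic hypersurface; if it is minimal there is nothing to prove, so suppose it is not, and seek a contradiction. Write $A$ for its shape operator, $H=\tfrac{1}{5}\operatorname{tr}A$ for its (constant, nonzero) mean curvature and $S=|A|^{2}$. If the principal curvature $0$ has multiplicity at most one, Theorem~\ref{mainth3} forces $M^{5}$ to be minimal, a contradiction; hence we may assume that $0$ is a principal curvature of multiplicity $m\ge2$ on some nonempty open set.

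The next step is to fix the number of distinct principal curvatures. A CMC triharmonic hypersurface of $\mathbb R^{n+1}$ with $S$ constant is minimal (the $c=0$ case of the quoted Montaldo--Oniciuc--Ratto theorem), so here $S$ cannot be constant. If $M^{5}$ had at most three distinct principal curvatures, Theorem~\ref{ChenGuan} would make its scalar curvature, hence (by the Gauss equation and $H=\mathrm{const}$) also $S$, constant --- a contradiction; so $M^{5}$ has at least four distinct principal curvatures. Counting with multiplicity, the five principal curvatures consist of $m$ copies of $0$ together with $5-m$ further values, and accommodating at least three additional pairwise distinct nonzero values forces $m=2$ and exactly four distinct principal curvatures. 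Thus on the relevant open set the principal curvatures are
\begin{equation*}
0,\ 0,\ \mu_{2},\ \mu_{3},\ \mu_{4},
\end{equation*}
with $\mu_{2},\mu_{3},\mu_{4}$ nonzero and pairwise distinct and $\mu_{2}+\mu_{3}+\mu_{4}=5H$ constant. Note that this $(2,1,1,1)$-pattern is not covered by the companion theorem of Chen--Guan on four distinct principal curvatures, where $0$ is assumed to have multiplicity at most one --- which is precisely why the case of $\mathbb R^{6}$ needs its own argument.

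It then remains to rule out this configuration. I would work on the open set $U=\{\nabla S\ne0\}$, take a local orthonormal principal frame $\{e_{i}\}_{i=1}^{5}$, and use the standard fact for CMC triharmonic hypersurfaces that $\nabla S$ is a principal direction wherever it does not vanish; after relabeling, $e_{1}=\nabla S/|\nabla S|$ and $Ae_{1}=\lambda_{1}e_{1}$. A sub-argument should show $\lambda_{1}\ne0$, ruling out the awkward possibility that $\nabla S$ lies in the two-dimensional kernel of $A$, so that $\lambda_{1}\in\{\mu_{2},\mu_{3},\mu_{4}\}$ and $e_{1}$ spans that simple eigenspace. The Codazzi equations, read within the kernel of $A$ and across the simple eigenspaces, then pin down the connection coefficients $\langle\nabla_{e_{i}}e_{j},e_{k}\rangle$ (forcing many of them to vanish) and express the $e_{1}$-derivatives of $\mu_{3},\mu_{4}$ through that of $\mu_{2}$. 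Feeding these, together with $\mu_{2}+\mu_{3}+\mu_{4}=\mathrm{const}$ and the resulting expressions for $S$, $\operatorname{tr}A^{3}$ and $\operatorname{tr}A^{4}$, into the normal component of $\tau_{3}=0$ --- which for constant $H$ reduces to a relation among $\Delta S$, $|\nabla A|^{2}$ and polynomials in $\mu_{2},\mu_{3},\mu_{4}$ --- should produce an overdetermined polynomial system with no solution on $U$. Hence $U=\emptyset$, $S$ is constant, and we are back to the contradiction above.

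The step I expect to be the main obstacle is this last one. In contrast with Theorem~\ref{mainth3}, the double principal curvature $0$ blocks an immediate reduction of the number of distinct eigenvalues, and one genuinely needs the third-order information carried by $\tau_{3}$ --- in particular the $\operatorname{tr}A^{3}$ and $\operatorname{tr}A^{4}$ terms, which are invisible at the biharmonic level --- to close the argument. It is precisely the smallness of $n=5$, which leaves only a single eigenvalue ``slot'' beyond the double zero and the $\nabla S$-direction, that keeps the resulting algebraic system small enough to be shown inconsistent.
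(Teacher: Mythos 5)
Your reduction to the pattern $(0,0,\mu_2,\mu_3,\mu_4)$ with three simple nonzero curvatures is fine and matches the paper (Theorem~\ref{mainth3} for multiplicity $\le 1$, Theorem~\ref{ChenGuan} to exclude $d\le 3$ on $\{\nabla S\ne 0\}$). But the plan breaks at the structural step: you propose a sub-argument showing $\lambda_1\ne 0$, i.e.\ that $\nabla S$ does \emph{not} lie in the kernel of $A$. This is impossible, because for a CMC proper triharmonic hypersurface the second equation of \eqref{triharmonic condition} is exactly $A\nabla S=0$; on $\mathcal N=\{\nabla S\ne0\}$ the unit vector $e_1\parallel\nabla S$ is therefore a principal direction with principal curvature $0$, so $e_1$ sits inside the two-dimensional kernel of $A$ (this is why the paper sets $I_1=\{1,2\}$). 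Your subsequent Codazzi bookkeeping, built on $e_1$ spanning a simple nonzero eigenspace, starts from a false premise, and the ``awkward possibility'' you try to rule out is in fact the only possibility.

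Beyond that, the decisive part of the argument is only asserted, not carried out: ``feeding these \dots into the normal component of $\tau_3=0$ should produce an overdetermined polynomial system with no solution'' is precisely where the paper does its real work. Concretely, the paper uses Lemma~\ref{lemma3} with $c=0$ (so $\sum_\alpha n_\alpha\mu_\alpha P_\alpha^q=0$ for all $q$) and a Vandermonde argument to reduce to the subcase $P_3=0$, $P_4=P_5=:P\ne0$, which forces $\lambda_4+\lambda_5=0$ and $\lambda_3=5H$; then, using $e_2(S)=0$, the commutator identity $[e_1,e_2]S=0$ to get $\Gamma_{12}^1=0$, the Gauss equation $R_{1212}=0$, and the rewritten first triharmonic equation of Lemma~\ref{lemma4}, it performs the explicit elimination \eqref{G1}--\eqref{G8} ending in $4\mu^2H^2=125H^4$, hence $S$ constant on $\mathcal N$, a contradiction. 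Without the correct fact $\lambda_1=0$ (which feeds Lemmas~\ref{lemma2}--\ref{lemma4}) and without this third-order elimination, the configuration $(0,0,\mu_2,\mu_3,\mu_4)$ is not actually excluded, so the proposal has a genuine gap at its core.
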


\begin{remark}
Note that Theorems \ref{mainth3} and \ref{mainth4}  give partial affirmative answers to the generalized Chen's Conjecture.
\end{remark}

\begin{remark}
The assumption that the multiplicity of the principal curvature  zero is at most one was necessary in {\rm \cite{chenguang2}} for treating triharmonic
hypersurfaces with four distinct principal curvatures in space forms. In our results, we only need this for $c=0$ and $n>5$.
\end{remark}

At last, we point out that for a CMC proper triharmonic
hypersurface in $N^{n+1}(c)$, the two equations in \eqref{triharmonic condition} are quite similar to the equations of a proper biharmonic hypersurface in a space form, see for instance \cite{Fu-Yang-Zhan2021}. This is reasonable because the geometry property of triharmonicity is much weaker than biharmonicity. Hence, it is expected that more geometric features of triharmonic hypersurfaces could be found. Interestingly, we can achieve a complete classification of CMC triharmonic hypersurfaces in $N^{n+1}(c)$ with $c\neq0$. This  will benefit us in studying biharmonic hypersurfaces in $N^{n+1}(c)$.\\

The paper is organized as follows. In Section 2, we recall some
background on the theory of triharmonic hypersurfaces in space forms and derive some useful lemmas, which are very important for us to study the geometric properties of triharmonic hypersurfaces. In Section 3, we give the proofs of  Theorems \ref{mainth1} and  \ref{mainth2}. In Section 4, we finish the proofs of  Theorems \ref{mainth3} and  \ref{mainth4}.

\section{Preliminaries}

Let $N^{n+1}(c)$ be an $(n+1)$-dimensional Riemannian space form
with constant sectional curvature $c$. For an isometric immersion
$\phi: M^n\rightarrow N^{n+1}(c)$, we denote by $\nabla$ the
Levi-Civita connection of $M^n$ and $\widetilde{\nabla}$  the Levi-Civita
connection of $N^{n+1}(c)$. The Riemannian curvature tensors of $M^n$
are respectively given by
\begin{align*}
&R(X,Y)Z=(\nabla_{X}\nabla_{Y}-\nabla_{Y}\nabla_{X}-\nabla_{[X,Y]})Z,\\
&R(X,Y,Z,W)=\langle R(X,Y)W,Z\rangle.
\end{align*}
The Gauss and Weingarten formulae are stated, respectively, as
\begin{align*}
\widetilde{\nabla}_{X}Y&=\nabla_{X}Y+h(X,Y)\xi,\\
\widetilde{\nabla}_{X}\xi&=-AX.
\end{align*}
Here $X, Y, Z, W$ are tangent vector fields on $M$, $\xi$ is the unit normal vector field on $M$, $h$ is the second fundamental form of $M$, and $A$ is the shape operator.

Let us choose an orthonormal frame $\{e_{i}\}_{i=1}^{n}$ of $M$. With this frame, define $\nabla_{e_{i}}e_{j}=\sum_{k}\Gamma_{ij}^{k}e_{k}$, where $\Gamma_{ij}^k$ are the connection coefficients.

Denote by
\begin{align*}
R_{ijkl}=&R(e_{i},e_{j},e_{k},e_{l}),\quad h_{ij}=h(e_{i},e_{j}),\\
h_{ijk}=&e_{k}(h_{ij})-h(\nabla_{e_{k}}e_{i},e_{j})-h(e_{i},\nabla_{e_{k}}e_{j}),\\
=&e_{k}(h_{ij})-\sum_{l}(\Gamma_{ki}^{l}h_{lj}+\Gamma_{kj}^{l}h_{il}).
\end{align*}
From the definition of the Gauss curvature tensor we obtain
\begin{align}\label{z1}
R_{ijkl}&=e_{i}(\Gamma_{jl}^{k})-e_{j}(\Gamma_{il}^{k})+\sum_{m}\Big(\Gamma_{jl}^{m}\Gamma_{im}^{k}-
\Gamma_{il}^{m}\Gamma_{jm}^{k}-(\Gamma_{ij}^{m}-\Gamma_{ji}^{m})\Gamma_{ml}^{k}\Big).
\end{align}
Moreover, the Gauss and Codazzi equations are given, respectively,  by
\begin{align}
R_{ijkl}&=c(\delta_{ik}\delta_{jl}-\delta_{il}\delta_{jk})+(h_{ik}h_{jl}-h_{il}h_{jk})\label{gauss},\\
h_{ijk}&=h_{ikj}.
\end{align}
The mean curvature function $H$ and the squared norm of the second fundamental form $S$ are written respectively as
\begin{eqnarray}\label{meancur}
H=\frac{1}{n}\sum_{i=1}^n h_{ii} \quad{\rm and}\quad S=\sum_{i,j=1}^n h_{ij}^2.
 \end{eqnarray}
 From the Gauss equation, the scalar curvature $R$ is given by
\begin{eqnarray}\label{scar}
R = n(n-1)c+n^2H^2-S.
\end{eqnarray}
We recall a fundamental characterization result on CMC triharmonic hypersurfaces in $N^{n+1}(c)$.
\begin{proposition} \rm{(c.f.\cite{Montaldo-Oniciuc-Ratto2022})}
A CMC hypersurface $\phi: M^n\rightarrow  N^{n+1}(c)$ is triharmonic
if the mean curvature $H$ and the squared norm of the second fundamental form $S$ on $M^n$
satisfy
\begin{equation}\label{triharmonic condition1}
\left\{
\begin{split}
&H({\rm \Delta}\, S+S^2-ncS-n^2cH^2)=0,\\
&HA\nabla S=0.
\end{split}
\right.
\end{equation}
\end{proposition}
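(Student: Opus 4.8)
The plan is to derive the system \eqref{triharmonic condition1} from the general expression for the tri-tension field, specialized to an isometric immersion into a space form and then decomposed into its tangential and normal parts. Iterating the first variation of the energy, $\tau_3(\phi)$ is a fourth-order section of $\phi^*TN$ whose principal part is $(\Delta^\phi)^2\tau(\phi)$ and whose remaining terms are built by contracting the ambient curvature tensor $R^N$ (and $\nabla R^N$) with $d\phi$, $\tau(\phi)$, $\nabla^\phi\tau(\phi)$ and $\Delta^\phi\tau(\phi)$, where $\Delta^\phi$ is the rough Laplacian on $\phi^*TN$ (see \cite{Maeta2012PAMS, Montaldo-Oniciuc-Ratto2022}); in particular $\tau_3(\phi)$ is of degree at most one in the curvature. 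For $N=N^{n+1}(c)$ one has $R^N(X,Y)Z=c(\langle Y,Z\rangle X-\langle X,Z\rangle Y)$ and $\nabla R^N=0$, so each curvature term reduces to a constant times $c$ times one of $\tau(\phi)$, $\nabla^\phi\tau(\phi)$, $\Delta^\phi\tau(\phi)$.

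I would then specialize to $\phi:M^n\to N^{n+1}(c)$, where $\tau(\phi)=nH\xi$ with $H$ constant. The Weingarten formula gives $\nabla^\phi_X\xi=-AX$, while the CMC hypothesis and the Codazzi equation give $\operatorname{div}A=n\nabla H=0$, so the rough Laplacian of $\xi$ is purely normal: $\Delta^\phi\xi=-S\xi$ (using $\operatorname{tr}(A^2)=S$). Together with the product rule $\Delta^\phi(f\xi)=(\Delta f)\xi+2\nabla^\phi_{\nabla f}\xi+f\,\Delta^\phi\xi=(\Delta f)\xi-2A\nabla f-fS\xi$ this yields, step by step,
\[
\Delta^\phi\tau(\phi)=-nHS\,\xi,\qquad (\Delta^\phi)^2\tau(\phi)=nH\bigl(S^2-\Delta S\bigr)\xi+2nH\,A\nabla S .
\]
Since $\tau(\phi)=nH\xi$ is normal whereas $d\phi(e_i)$ is tangent, every curvature contraction appearing in $\tau_3(\phi)$ turns out to point along $\xi$: the contractions built from $\Delta^\phi\tau(\phi)$ contribute a normal term proportional to $ncS$, and those built from $\tau(\phi)$ and $\nabla^\phi\tau(\phi)=-nHA(\cdot)$ contribute a normal term proportional to $n^2cH^2$. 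Collecting terms, for a CMC hypersurface $\tau_3(\phi)$ splits as
\[
\tau_3(\phi)=\lambda\, nH\bigl(\Delta S+S^2-ncS-n^2cH^2\bigr)\,\xi+\mu\, nH\,A\nabla S
\]
with nonzero universal constants $\lambda,\mu$ (whose signs depend only on the chosen conventions for $\Delta$ and $R^N$). Because $TM$ and $\xi$ span $\phi^*TN$, the triharmonicity condition $\tau_3(\phi)=0$ is equivalent to the vanishing of both components, i.e.\ to \eqref{triharmonic condition1}.

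The step I expect to be the main obstacle is the bookkeeping for $\tau_3$: one has to write out the full curvature part of the general tri-tension field and then check, for a hypersurface of a space form, that every curvature contraction does land in the normal direction and that these contributions combine into exactly $-ncS-n^2cH^2$ with the correct coefficients (so that no spurious term such as $H^2S$ survives), and also that the sole tangential contribution is the $2nH\,A\nabla S$ coming from $(\Delta^\phi)^2\tau(\phi)$, i.e.\ that the first-order curvature terms built from $\nabla^\phi\tau(\phi)$ cancel in the tangential direction. Both checks rely on repeated use of the Codazzi equation and the CMC hypothesis, and the whole computation must be run with a single fixed sign convention for the Laplacian and for the curvature tensor.
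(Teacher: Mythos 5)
The paper does not prove this proposition at all: it is quoted verbatim from Montaldo--Oniciuc--Ratto \cite{Montaldo-Oniciuc-Ratto2022}, so the comparison is really with the derivation in that reference, and your outline follows exactly that standard route (tri-tension field, specialize the curvature terms to a space form, split into normal and tangential parts). The pieces you actually compute are correct: with your trace convention, $\Delta^\phi\tau(\phi)=-nHS\,\xi$ and $(\Delta^\phi)^2\tau(\phi)=nH(S^2-\Delta S)\xi+2nH\,A\nabla S$ follow from the CMC hypothesis and Codazzi, and matching signs with the paper's convention $\Delta S=-\sum_i(\nabla_{e_i}\nabla_{e_i}S-\nabla_{\nabla_{e_i}e_i}S)$ turns $S^2-\Delta S$ into $\Delta S+S^2$ as in \eqref{triharmonic condition1}.

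The gap is that the decisive step is asserted rather than carried out: you never write down the explicit expression for $\tau_3(\phi)$, and the verification that every curvature contraction is normal and that their total is exactly $-ncS-n^2cH^2$ (with no extra tangential term) is precisely the content of the proposition, yet you defer it as the ``main obstacle.'' As it stands the coefficients $\lambda,\mu$ and the claimed absence of spurious terms are unproved. The fix is short once the formula is on the table: for a target of constant sectional curvature (Maeta, \cite{Maeta2012PAMS}),
\begin{equation*}
\tau_3(\phi)=\bigl(\Delta^\phi\bigr)^2\tau(\phi)-\sum_j R^N\bigl(\Delta^\phi\tau(\phi),d\phi(e_j)\bigr)d\phi(e_j)-\sum_j R^N\bigl(\nabla^\phi_{e_j}\tau(\phi),\tau(\phi)\bigr)d\phi(e_j),
\end{equation*}
and with $R^N(X,Y)Z=c(\langle Y,Z\rangle X-\langle X,Z\rangle Y)$, $\tau(\phi)=nH\xi$ normal and $\Delta^\phi\tau(\phi)$ normal (by CMC), the first curvature sum equals $cn\,\Delta^\phi\tau(\phi)=-cn^2HS\,\xi$ and the second equals $-c\sum_j\langle\nabla^\phi_{e_j}\tau,d\phi(e_j)\rangle\,\tau=cn^2H^2\cdot nH\,\xi$; both are purely normal, so the only tangential term is $2nH\,A\nabla S$ and the normal component is $nH(\Delta S+S^2-ncS-n^2cH^2)$, giving the system (indeed as an equivalence, which is stronger than the stated ``if''). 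Until you exhibit this formula and run that two-line contraction, your argument establishes the shape of the answer but not the proposition itself.
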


According to \eqref{triharmonic condition1}, it is clear that minimal hypersurfaces are automatically triharmonic in
$N^{n+1}(c)$.  A triharmonic hypersurfaces in $N^{n+1}(c)$ is called {\em proper} if it is not minimal.

In the following, we will consider a CMC proper hypersurface $M^n$
in a space form  $N^{n+1}(c)$. Then \eqref{triharmonic condition1}
becomes
\begin{equation}\label{triharmonic condition}
\left\{
\begin{split}
&{\rm \Delta}\, S+S^2-ncS-n^2cH^2=0,\\
&A\nabla S=0.
\end{split}
\right.
\end{equation}

For a hypersurface $M^n$ in $N^{n+1}(c)$, we denote by $\lambda_i$ for $1\leq i\leq n$ its principal curvatures. The number of distinct principal curvatures is locally constant and  the set of all points here is an open and dense
subset of $M^n$. Denote by $M_A$ this set. On a non-empty connected component
of $M_A$, which is open, the number of distinct principal curvatures is
constant. On that connected component, the multiplicities of the distinct
principal curvatures are constant and hence $\lambda_i$ are always smooth and the shape operator $A$ is
locally diagonalizable (see\cite{Nomizu, Ryan1, Ryan2}).

Denote by $\mathcal{N}:=\{p\in M:\nabla S(p)\neq 0\}$ and $\mathcal{N}\subset M_A$. If $S$ is constant, then $\mathcal{N}$ is an empty set. From
now on, we assume that $S$ is not constant, that is $\mathcal{N}\neq\emptyset$. We will work in
$\mathcal{N}$.

Observing from the second equation of \eqref{triharmonic condition},
it is known that $\nabla S$ is a principal direction with the corresponding principal curvature $0$. Hence, we may choose an orthonormal frame $\{e_{i}\}_{i=1}^{n}$ such that $e_{1}$ is parallel to $\nabla S$ and the shape operator $A$ is diagonalizable with respect to $\{e_{i}\}$, i.e., $h_{ij}=\lambda_{i}\delta_{ij}$, where $\lambda_{i}$ is the principal curvature and $\lambda_{1}=0$.

Suppose that $M^n$ has $d$ distinct principal curvatures $\mu_{1}=0, \mu_{2},\cdots,\mu_{d}$ with $d\geq4$, that is
\begin{equation*}
\lambda_{i}=\mu_{\alpha} \quad {\rm when} \quad i\in I_{\alpha},
\end{equation*}
where
\begin{equation*}
I_{\alpha}=\Big\{\sum_{0\leq\beta\leq\alpha-1}n_{\beta}+1,\cdots,\sum_{0\leq\beta\leq\alpha}n_{\beta}\Big\}
\end{equation*}
with $n_{0}=0$ and $n_{\alpha}\in\mathbb{Z}_{+}$ satisfying $\sum_{1\leq\alpha\leq d}n_{\alpha}=n$, namely, $n_{\alpha}$ is the multiplicity of $\mu_{\alpha}$.
For convenience, we will use the range of indices $1\leq\alpha,\beta,\gamma,\cdots\leq d$ except special declaration.

We collect a lemma for later use.
\begin{lemma}{\rm(c.f.\cite{chenguang2})}\label{lemma1}
The connection coefficients $\Gamma_{ij}^{k}$ satisfy:
\newline {\rm(1)} $\Gamma_{ij}^{k}=-\Gamma_{ik}^{j}$.
\newline {\rm(2)} $\Gamma_{ii}^{k}=\frac{e_{k}(\lambda_{i})}{\lambda_{i}-\lambda_{k}}$ for $i\in I_{\alpha}$ and $k\notin I_{\alpha}$.
\newline {\rm(3)} $\Gamma_{ij}^{k}=\Gamma_{ji}^{k}$ if the indices satisfy one of the following conditions:
\newline \indent {\rm(3a)} $i,j\in I_{\alpha}$ but  $k\notin I_{\alpha}$;
\newline \indent {\rm(3b)} $i,j\geq2$ and $k=1$.
\newline{\rm(4)} $\Gamma_{ij}^{k}=0$ if the indices satisfy one of the following conditions:
\newline\indent {\rm(4a)} $j=k$;
\newline\indent {\rm(4b)} $i=j\in I_{1}$ and $k\notin I_{1}$;
\newline\indent {\rm(4c)} $i,k\in I_{\alpha}, i\neq k$ and $j\notin I_{\alpha}$;
\newline\indent {\rm(4d)} $i,j\geq2, i\in I_{\alpha}, j\in I_{\beta}$ with $\alpha\neq\beta$ and $k=1$.
\newline {\rm(5)} $\Gamma_{ji}^{k}=\frac{\lambda_{j}-\lambda_{k}}{\lambda_{i}-\lambda_{k}}\Gamma_{ij}^{k},$
$\Gamma_{ki}^{j}=\frac{\lambda_{k}-\lambda_{j}}{\lambda_{i}-\lambda_{j}}\Gamma_{ik}^{j}$ for $\lambda_{i}$, $\lambda_{j}$ and $\lambda_{k}$ are mutually different.
\newline{\rm(6)} $\Gamma_{ij}^{k}\Gamma_{ji}^{k}+\Gamma_{ik}^{j}\Gamma_{ki}^{j}+\Gamma_{jk}^{i}\Gamma_{kj}^{i}=0$
for $\lambda_{i}$, $\lambda_{j}$ and $\lambda_{k}$ are mutually different.
\end{lemma}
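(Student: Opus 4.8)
The plan is to read every assertion off the structure equations once the shape operator has been diagonalized in the adapted frame, with the only genuinely geometric input being the gradient nature of $\nabla S$. I would begin with the two purely metric facts. Differentiating the orthonormality relation $\langle e_j,e_k\rangle=\delta_{jk}$ along $e_i$ gives $\Gamma_{ij}^k+\Gamma_{ik}^j=0$, which is (1); the case $j=k$ yields $\Gamma_{ij}^j=0$, i.e.\ (4a). The central computational device is obtained by substituting $h_{ij}=\lambda_i\delta_{ij}$ into the definition of $h_{ijk}$ and using (1): a short computation gives
\[
h_{ijk}=\delta_{ij}\,e_k(\lambda_i)+(\lambda_i-\lambda_j)\,\Gamma_{ki}^{j}.
\]
Since $h$ is symmetric in its first two arguments, $h_{ijk}=h_{jik}$, and the Codazzi equation gives $h_{ijk}=h_{ikj}$; as the transpositions $(1\,2)$ and $(2\,3)$ generate $S_3$, the quantity $h_{ijk}$ is totally symmetric in $i,j,k$. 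This symmetry together with the displayed identity is the engine for the remaining items.

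For (2) I would use Codazzi in the form $h_{iik}=h_{iki}$: the left side equals $e_k(\lambda_i)$ and the right side equals $(\lambda_i-\lambda_k)\Gamma_{ii}^k$, so $\Gamma_{ii}^k=e_k(\lambda_i)/(\lambda_i-\lambda_k)$ when $i\in I_\alpha$, $k\notin I_\alpha$. Item (4b) is then immediate, since $\lambda_i\equiv0$ on $I_1$ forces $e_k(\lambda_i)=0$. For (3a) and (4c) the mechanism is that whenever two distinct indices share an eigenvalue the corresponding totally symmetric component of $h$ vanishes: if $\lambda_p=\lambda_q$ with $p\neq q$, the identity gives $h_{pqr}=0$ for every $r$ (both the Kronecker term and $\lambda_p-\lambda_q$ vanish), and then reading this zero in the permutation that exposes a connection coefficient, $0=h_{jki}=(\lambda_j-\lambda_k)\Gamma_{ij}^k$, forces $\Gamma_{ij}^k=0$ as soon as the surviving eigenvalue gap is nonzero; this yields (4c) and gives (3a) with both sides equal to zero. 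Item (5) is read off directly: for mutually distinct $\lambda_i,\lambda_j,\lambda_k$ the identity expresses the equal permutations $h_{ikj}$ and $h_{jki}$ as $(\lambda_i-\lambda_k)\Gamma_{ji}^k$ and $(\lambda_j-\lambda_k)\Gamma_{ij}^k$, and equating them gives the first relation in (5), the second being analogous. Finally (6) is purely algebraic: substituting (5) and (1) rewrites all three products in terms of the single coefficient $\Gamma_{ij}^k$, and the common scalar factor telescopes through $(\lambda_j-\lambda_k)[(\lambda_i-\lambda_j)-(\lambda_i-\lambda_k)+(\lambda_j-\lambda_k)]=0$.

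The only items that use the triharmonic hypothesis beyond the normalization of the frame are (3b) and (4d), and these are where I expect the real (though short) work to lie. The second equation of \eqref{triharmonic condition} forces $\nabla S\parallel e_1$, so $e_a(S)=0$ for every $a\geq2$ while $e_1(S)\neq0$ on $\mathcal N$. For (3b) I would invoke the symmetry of the Hessian of $S$: for $i,j\geq2$ one has $\mathrm{Hess}\,S(e_i,e_j)=e_i(e_j(S))-\sum_k\Gamma_{ij}^k e_k(S)=-\Gamma_{ij}^1\,e_1(S)$, since $e_j(S)\equiv0$ kills the first term and $e_k(S)=0$ for $k\neq1$ kills the rest; symmetry of the Hessian and $e_1(S)\neq0$ then give $\Gamma_{ij}^1=\Gamma_{ji}^1$ (equivalently, the distribution orthogonal to $\nabla S$, being tangent to the level sets of $S$, is integrable). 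Item (4d) then follows by feeding (3b) back into total symmetry: for $i\in I_\alpha$, $j\in I_\beta$ with $\alpha\neq\beta$ and both $\geq2$, the displayed identity gives $h_{i1j}=\lambda_i\Gamma_{ji}^1$ and $h_{j1i}=\lambda_j\Gamma_{ij}^1$, which are equal by total symmetry; combined with $\Gamma_{ij}^1=\Gamma_{ji}^1$ and $\lambda_i\neq\lambda_j$ this forces $\Gamma_{ij}^1=0$. The main obstacle is thus isolating the geometric input (3b); once the gradient nature of $\nabla S$ is exploited through the symmetry of its Hessian, the rest of the lemma is bookkeeping with the diagonalized Codazzi identity and elementary algebra.
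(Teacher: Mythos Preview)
Your proof is correct. Note, however, that the paper does not supply its own proof of this lemma: it is quoted verbatim from Chen--Guan \cite{chenguang2} and stated without argument, so there is no in-paper proof to compare against. Your derivation is the standard one---diagonalize $A$, extract the identity $h_{ijk}=\delta_{ij}e_k(\lambda_i)+(\lambda_i-\lambda_j)\Gamma_{ki}^{\,j}$ and exploit its total symmetry from Codazzi---and every item falls out exactly as you describe. The only point that genuinely uses the triharmonic setting (through $A\nabla S=0$ and $e_1(S)\neq0$ on $\mathcal N$) is (3b), and your Hessian-of-$S$ argument is the cleanest way to see it; (4d) then follows by combining (3b) with the Codazzi identity as you do. There are no gaps.
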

We first derive some crucial lemmas for studying CMC proper triharmonic hypersurfaces in a space form.
\begin{lemma}\label{lemma2}
Denoting by $P_{\alpha}=\frac{e_{1}(\mu_{\alpha})}{\mu_{\alpha}}$ for $2\leq\alpha\leq d$, we have
\begin{align}
&e_{1}(P_{\alpha})=P_{\alpha}^{2}+c+\sum_{m\in I_{1}}\Gamma_{\alpha\alpha}^{m}\Gamma_{11}^{m},\label{F1} \\
&e_{j}(P_{\alpha})=\Gamma_{\alpha\alpha}^{j}P_{\alpha}-\sum_{m\in I_{1}}\Gamma_{\alpha\alpha}^{m}\Gamma_{jm}^{1} \,\,\,{\rm for}\,\, j\in I_{1} \,\,{\rm and}\,\, j\neq1.\label{F2}
\end{align}
\end{lemma}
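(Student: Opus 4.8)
The plan is to work on the open set $\mathcal{N}$ where $\nabla S\neq 0$, using the frame $\{e_i\}$ adapted so that $e_1\parallel\nabla S$, $\lambda_1=0$, and $A$ is diagonal. Recall that $P_\alpha=e_1(\mu_\alpha)/\mu_\alpha$ for $2\le\alpha\le d$; since $\mu_\alpha\neq 0$ on $\mathcal{N}$ and the $\mu_\alpha$ are smooth there, $P_\alpha$ is a well-defined smooth function. The two identities to be proven should both come from specializing the general curvature identity \eqref{z1} together with the Gauss equation \eqref{gauss} and the structural information about the connection coefficients collected in Lemma \ref{lemma1}.

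For \eqref{F1}, I would compute $R_{1\alpha 1\alpha}$ in two ways, for a fixed index $\alpha$ with a chosen $i\in I_\alpha$ (writing $\alpha$ for $e_i$). On one side, the Gauss equation \eqref{gauss} gives $R_{1\alpha 1\alpha}=c+\lambda_1\lambda_\alpha-\lambda_1^2\cdots = c$ since $\lambda_1=0$ (being careful with the index placement: $R_{i\alpha i\alpha}=c(\delta\cdots)+(h_{ii}h_{\alpha\alpha}-h_{i\alpha}^2)=c$). On the other side, expand $R_{1\alpha 1\alpha}$ via \eqref{z1}; the terms $e_1(\Gamma_{\alpha\alpha}^1)$, $e_\alpha(\Gamma_{1\alpha}^1)$ and the quadratic $\sum_m(\cdots)$ need to be rewritten using Lemma \ref{lemma1}: by (2), $\Gamma_{\alpha\alpha}^1 = e_1(\lambda_\alpha)/(\lambda_\alpha-\lambda_1)=e_1(\mu_\alpha)/\mu_\alpha=P_\alpha$, so $e_1(\Gamma_{\alpha\alpha}^1)=e_1(P_\alpha)$. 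The surviving quadratic terms should, after using (1), (4a), and (5)–(6), collapse to $-P_\alpha^2$ plus the cross-term $-\sum_{m\in I_1}\Gamma_{\alpha\alpha}^m\Gamma_{11}^m$ coming from the $\Gamma_{jl}^m\Gamma_{im}^k$-type products with $m\in I_1$; rearranging then yields \eqref{F1}. For \eqref{F2}, the same strategy applied to $R_{1\alpha j\alpha}$ with $j\in I_1$, $j\neq 1$: the Gauss equation gives $R_{1\alpha j\alpha}=c(\delta_{1j}\delta_{\alpha\alpha}-\cdots)+(h_{1j}h_{\alpha\alpha}-h_{1\alpha}h_{j\alpha})=0$ since $h_{1j}=0$ (as $j\neq 1$ and $A$ is diagonal) and $h_{j\alpha}=0$. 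Expanding the left side via \eqref{z1}, the term $e_j(\Gamma_{\alpha\alpha}^1)=e_j(P_\alpha)$ appears; the remaining terms should be organized using Lemma \ref{lemma1} into $\Gamma_{\alpha\alpha}^j P_\alpha$ and the sum $\sum_{m\in I_1}\Gamma_{\alpha\alpha}^m\Gamma_{jm}^1$, giving \eqref{F2}.

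The main obstacle I anticipate is the careful bookkeeping of which connection coefficients vanish and which symmetry relations from Lemma \ref{lemma1} apply, since the quadratic part of \eqref{z1} is a sum over all intermediate indices $m$ running over different eigenspaces $I_\beta$. The contributions from $m\in I_\alpha$, from $m\in I_1$, and from $m\in I_\beta$ with $\beta\neq 1,\alpha$ must each be handled separately: the $m\in I_\beta$ ($\beta\neq 1,\alpha$) terms should cancel by virtue of Lemma \ref{lemma1}(6) (the three-$\Gamma$ relation among mutually distinct principal curvatures), the $m\in I_\alpha$ terms reduce using (4a) and the definition of $P_\alpha$, and only the $m\in I_1$ terms leave a genuine residual coupling to $\Gamma_{11}^m$ (resp. $\Gamma_{jm}^1$). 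Keeping the index-raising/lowering in $R(X,Y,Z,W)=\langle R(X,Y)W,Z\rangle$ consistent with the ordering in \eqref{z1}, and tracking the factors in (5), is where sign errors are most likely to creep in. Once the cancellations are correctly identified, the rest is a routine rearrangement.
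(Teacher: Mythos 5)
Your proposal is correct and follows essentially the same route as the paper: it computes $R_{1i1i}$ (for $i\in I_\alpha$) and the mixed component $R_{iji1}$ with $j\in I_1$, $j\neq 1$, once via the Gauss equation \eqref{gauss} (giving $c$ and $0$) and once via \eqref{z1}, using Lemma \ref{lemma1} and the identification $\Gamma_{ii}^{1}=P_\alpha$ to produce \eqref{F1} and \eqref{F2}. The only minor inaccuracy is in the anticipated bookkeeping: the intermediate terms with $m\in I_\beta$, $\beta\neq 1,\alpha$, drop out because the relevant coefficients vanish by the items in Lemma \ref{lemma1}(4) (in particular (4b)--(4d)), rather than by a cancellation through the three-term identity (6), but this does not affect the validity of the argument.
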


\begin{proof}

For $i\in I_{\alpha}$ and $\alpha\neq1$, it follows from  \eqref{z1} and the terms (1), (2) and (4) in Lemma \ref{lemma1} that
\begin{align*}
R_{1i1i}=&e_{1}(\Gamma_{ii}^{1})-e_{i}(\Gamma_{1i}^{1})+\sum_{m}\Big(\Gamma_{ii}^{m}\Gamma_{1m}^{1}-
\Gamma_{1i}^{m}\Gamma_{im}^{1}-(\Gamma_{1i}^{m}-\Gamma_{i1}^{m})\Gamma_{mi}^{1}\Big)\\
=&e_{1}(\Gamma_{ii}^{1})+e_{i}(\Gamma_{11}^{i})-\sum_{m}\Big(\Gamma_{ii}^{m}\Gamma_{11}^{m}+
\Gamma_{1i}^{m}\Gamma_{im}^{1}+(\Gamma_{1i}^{m}+\Gamma_{im}^{1})\Gamma_{mi}^{1}\Big)\\
=&e_{1}(\Gamma_{ii}^{1})-\sum_{m\in I_{1}}\Big(\Gamma_{ii}^{m}\Gamma_{11}^{m}+
\Gamma_{1i}^{m}\Gamma_{im}^{1}+(\Gamma_{1i}^{m}+\Gamma_{im}^{1})\Gamma_{mi}^{1}\Big)\\
=&e_{1}(\Gamma_{ii}^{1})-(\Gamma_{ii}^{1})^{2}-\sum_{m\in I_{1}}\Gamma_{ii}^{m}\Gamma_{11}^{m}.
\end{align*}
On the other hand, from the Gauss equation \eqref{gauss} we get $R_{1i1i}=c$. Therefore, we obtain  \eqref{F1}.

For $i\in I_{\alpha}$ and $\alpha\neq1$, $j\in I_{1}$ and $j\neq1$, it follows from \eqref{z1} and Lemma \ref{lemma1} that
\begin{align}
R_{iji1}&=e_{i}(\Gamma_{j1}^{i})-e_{j}(\Gamma_{i1}^{i})+\sum_{m}\Big(\Gamma_{j1}^{m}\Gamma_{im}^{i}-
\Gamma_{i1}^{m}\Gamma_{jm}^{i}-(\Gamma_{ij}^{m}-\Gamma_{ji}^{m})\Gamma_{m1}^{i}\Big)\nonumber\\
&=-e_{i}(\Gamma_{ji}^{1})+e_{j}(\Gamma_{ii}^{1})+\sum_{m}\Big(\Gamma_{jm}^{1}\Gamma_{ii}^{m}+
\Gamma_{im}^{1}\Gamma_{jm}^{i}+(\Gamma_{ij}^{m}-\Gamma_{ji}^{m})\Gamma_{mi}^{1}\Big)\nonumber\\
&=e_{j}(\Gamma_{ii}^{1})+\sum_{m\in I_1}\Gamma_{jm}^{1}\Gamma_{ii}^{m}-\Gamma_{ii}^{j}\Gamma_{ii}^{1},
\end{align}
which together with $R_{iji1}=0$ gives \eqref{F2}. Note that  $R_{iji1}=0$ follows from the Gauss equation \eqref{gauss} directly.
We thus complete the proof.
\end{proof}
\begin{lemma}\label{lemma3}
For any $q\in \mathbb{Z}_{+}$,  we have
\begin{align}\label{t3}
\begin{split}
f(q):=\sum_{2\leq\alpha\leq d}n_{\alpha}\mu_{\alpha}P^{q}_{\alpha}=
\left \{
\begin{array}{ll}
    \quad 0,                    & {\rm when} \,\,q \,\, {\rm is} \,\, {\rm odd};\\
       \frac{(q-1)!!}{q!!}(-c)^{\frac{q}{2}}nH,     & {\rm when} \,\, q \,\,  {\rm is} \,\, {\rm even}.
\end{array}
\right.
\end{split}
\end{align}
 \end{lemma}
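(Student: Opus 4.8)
The plan is to establish the identity $f(q) = \sum_{2\le\alpha\le d} n_\alpha \mu_\alpha P_\alpha^q$ by induction on $q$, using the first triharmonic equation $\Delta S + S^2 - ncS - n^2cH^2 = 0$ to seed the induction and the recursion formula \eqref{F1} from Lemma \ref{lemma2} to propagate it. First I would compute $\Delta S$ and $\nabla S$ explicitly in the adapted frame: since $e_1$ is parallel to $\nabla S$ and $S = \sum_\alpha n_\alpha \mu_\alpha^2$, we have $e_1(S) = 2\sum_\alpha n_\alpha \mu_\alpha e_1(\mu_\alpha) = 2\sum_\alpha n_\alpha \mu_\alpha^2 P_\alpha = 2f(2)$ using $P_\alpha = e_1(\mu_\alpha)/\mu_\alpha$; more importantly, expanding $\Delta S = e_1 e_1(S) - \sum_k \Gamma_{11}^k e_k(S)$ and combining with $S^2 - ncS - n^2cH^2$ should, after using the Codazzi-type relations and Lemma \ref{lemma1}, collapse to a polynomial relation among the $P_\alpha$'s that forces $f(1) = 0$ and pins down $f(2) = \frac{1}{2}(-c)nH$ (the $q=1$ and $q=2$ base cases). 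I expect the vanishing $f(1) = 0$ to follow more directly: differentiating $nH = \sum_\alpha n_\alpha \mu_\alpha$ along $e_1$ gives $0 = e_1(nH) = \sum_\alpha n_\alpha e_1(\mu_\alpha) = \sum_\alpha n_\alpha \mu_\alpha P_\alpha = f(1)$, which is the odd case $q=1$ for free.

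The inductive step is where \eqref{F1} enters. Assuming $f(1), \dots, f(q)$ are known, I would differentiate $f(q)$ along $e_1$:
\begin{align*}
e_1(f(q)) &= \sum_{\alpha} n_\alpha e_1(\mu_\alpha) P_\alpha^q + \sum_\alpha n_\alpha \mu_\alpha \cdot q P_\alpha^{q-1} e_1(P_\alpha) \\
&= f(q+2)\cdot(\text{something}) + q\sum_\alpha n_\alpha \mu_\alpha P_\alpha^{q-1}\Big(P_\alpha^2 + c + \sum_{m\in I_1}\Gamma_{\alpha\alpha}^m \Gamma_{11}^m\Big),
\end{align*}
so that $e_1(f(q)) = f(q+2) + q f(q+2) + qc f(q) + q\sum_\alpha n_\alpha \mu_\alpha P_\alpha^{q-1}\sum_{m\in I_1}\Gamma_{\alpha\alpha}^m \Gamma_{11}^m$. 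The key technical point — and I think the main obstacle — is showing that the cross-term $\sum_\alpha n_\alpha \mu_\alpha P_\alpha^{q-1}\sum_{m\in I_1}\Gamma_{\alpha\alpha}^m\Gamma_{11}^m$ either vanishes or can be absorbed. This should require a separate argument using the second equation $A\nabla S = 0$ together with part (4b) of Lemma \ref{lemma1} (which says $\Gamma_{ij}^k = 0$ when $i=j\in I_1$, $k\notin I_1$) and possibly \eqref{F2}; one likely approach is to show $\sum_{m\in I_1}\Gamma_{\alpha\alpha}^m\Gamma_{11}^m$ is itself forced to be zero, or that its weighted sum telescopes. Granting that, we get the clean recursion $e_1(f(q)) = (q+1)f(q+2) + qc\,f(q)$, and plugging in the inductive hypothesis for $f(q)$ (either $0$ or the stated double-factorial expression) and solving for $f(q+2)$ yields exactly the claimed formula: when $q$ is odd, $f(q) = 0$ and $e_1(f(q)) = 0$ force $f(q+2) = 0$; when $q$ is even, $f(q) = \frac{(q-1)!!}{q!!}(-c)^{q/2}nH$ is constant along $e_1$ (as $H$ is constant and the double-factorial coefficient is a constant), so $0 = (q+1)f(q+2) + qc\,f(q)$ gives $f(q+2) = -\frac{qc}{q+1}f(q) = -\frac{qc}{q+1}\cdot\frac{(q-1)!!}{q!!}(-c)^{q/2}nH = \frac{(q+1)!!}{(q+2)!!}(-c)^{(q+2)/2}nH$, matching the target with $q \mapsto q+2$.

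The remaining care needed is at the two separate parities of the base case. For $q=1$ we have $f(1)=0$ as noted. For $q=2$, I would extract $f(2) = \frac{1}{2}(-c)nH$ directly from the first triharmonic equation: since $S$ is constant in no direction other than along the $I_1$-block constraints, a short computation of $\Delta S$ in the adapted frame (using that $e_j(S) = 0$ for $j \ne 1$ because $\nabla S \parallel e_1$, and $e_1 e_1(S) = 2e_1(f(2)) = 2(3f(4) + 2c f(2))$ from the recursion) reduces $\Delta S + S^2 - ncS - n^2cH^2 = 0$ to an algebraic identity; alternatively, I would first prove the recursion $e_1(f(q)) = (q+1)f(q+2)+qcf(q)$ unconditionally (it does not use triharmonicity, only \eqref{F1} and the hypothetical cross-term vanishing), then feed it back into the triharmonic equation to determine $f(2)$ uniquely. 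Either route terminates the induction. The genuinely delicate step throughout is controlling $\sum_{m\in I_1}\Gamma_{\alpha\alpha}^m\Gamma_{11}^m$, so I would prioritize establishing its vanishing (most plausibly $\Gamma_{11}^m = e_m(\log\sqrt{\cdot}\,)$-type terms that are killed by $A\nabla S = 0$ and the fact that $\lambda_1 = 0$ has, or does not have, multiplicity greater than one) before assembling the rest.
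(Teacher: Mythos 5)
Your overall strategy is the paper's strategy (induct on $q$, differentiate along $e_1$, use \eqref{F1}), and your $q=1$ base case is exactly the paper's. But the step you yourself flag as the main obstacle --- controlling the cross term $\sum_{m\in I_1}\Gamma_{\alpha\alpha}^m\Gamma_{11}^m$ --- is precisely the new content of the lemma (the case $n_1=1$ is simply quoted from \cite{chenguang2}), and you leave it unproven (``granting that\dots''). Your most plausible candidate, that $\sum_{m\in I_1}\Gamma_{\alpha\alpha}^m\Gamma_{11}^m$ vanishes for each fixed $\alpha$, is not what happens and is not claimed in the paper; what is true is only the weighted statement. Writing $\mu_\alpha\Gamma_{\alpha\alpha}^m=e_m(\mu_\alpha)$, the cross term becomes $q\sum_{m\in I_1}\bigl(\sum_{2\le\alpha\le d} n_\alpha e_m(\mu_\alpha)P_\alpha^{q-1}\bigr)\Gamma_{11}^m$, and the inner sums are killed by a second, parallel induction: differentiate the already-established constancy of $f(q-1)$ in the directions $e_j$, $j\in I_1$, $j\neq1$, use \eqref{F2} to get the relation \eqref{T1}, and bootstrap from $\sum_\alpha n_\alpha e_m(\mu_\alpha)=0$, which is the CMC condition differentiated in the $I_1$-directions (\eqref{FF1}). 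Without this second induction the argument does not close, so as written there is a genuine gap at the crux.

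Two further corrections. First, your recursion has an index/coefficient slip: since $e_1(\mu_\alpha)=\mu_\alpha P_\alpha$, differentiating $f(q)$ raises the exponent by one, so (once the cross terms are removed) the identity is $(q+1)f(q+1)+cq\,f(q-1)=0$, not $(q+1)f(q+2)+qc\,f(q)=0$; with your coefficients the double-factorial formula does not actually come out, since $\frac{q}{q+1}\cdot\frac{(q-1)!!}{q!!}\neq\frac{(q+1)!!}{(q+2)!!}$, whereas the correct step-two form $f(q+2)=-\frac{(q+1)c}{q+2}f(q)$ does reproduce it. Second, your $q=2$ base case should not route through the equation $\Delta S+S^2-ncS-n^2cH^2=0$: that equation involves sums weighted by $\mu_\alpha^2$ (this is the content of Lemma \ref{lemma4}) and plays no role in Lemma \ref{lemma3}; $f(2)=-\tfrac12 cnH$ follows simply by differentiating the $q=1$ identity \eqref{t1} along $e_1$, using \eqref{F1} and again the vanishing $\sum_\alpha n_\alpha e_m(\mu_\alpha)=0$ for $m\in I_1$, $m\neq1$. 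Indeed the lemma uses only the constancy of $H$, the adapted frame coming from $A\nabla S=0$, and Lemma \ref{lemma2} --- no use of the first triharmonic equation is needed or possible here.
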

\begin{proof}
Since the case $n_1=1$ has been obtained in \cite{chenguang2}, we only need to prove it for $n_1>1$.

Taking into account the definition of $H$ from the first expression of \eqref{meancur}, we have
\begin{align}\label{mean2}
\sum_{2\leq\alpha\leq d}n_{\alpha}\mu_{\alpha}=nH.
\end{align}
Since $H$ is constant, differentiating \eqref{mean2} with respect to $e_{1}$, we obtain
\begin{align*}
\sum_{2\leq\alpha\leq d}n_{\alpha}e_{1}(\mu_{\alpha})=0,
\end{align*}
which is equivalent to
\begin{align}\label{t1}
\sum_{2\leq\alpha\leq d}n_{\alpha}\mu_{\alpha}P_{\alpha}=0.
\end{align}
Differentiating \eqref{mean2} with respect to $e_{m}$ for $m\in I_1$ and $m\neq1$, we obtain
\begin{align}\label{FF1}
\sum_{2\leq\alpha\leq d}n_{\alpha}e_{m}(\mu_{\alpha})=0.
\end{align}
Differentiating \eqref{t1} with respect to $e_{1}$, from \eqref{F1}, \eqref{mean2}, $\mu_{\alpha}\Gamma_{\alpha\alpha}^{m}=e_m(\mu_{\alpha})$ and \eqref{FF1} we have
\begin{align*}
0&=\sum_{2\leq\alpha\leq d}n_{\alpha}\Big(e_{1}(\mu_{\alpha})P_{\alpha}+\mu_{\alpha}e_{1}(P_{\alpha})\Big)\nonumber\\
&=\sum_{2\leq\alpha\leq d}n_{\alpha}\Big(\mu_{\alpha}P_{\alpha}^{2}+\mu_{\alpha}(P_{\alpha}^{2}+c+\sum_{m\in I_{1}}\Gamma_{\alpha\alpha}^{m}\Gamma_{11}^{m})\Big)\nonumber\\
&=2\sum_{2\leq\alpha\leq d}n_{\alpha}\mu_{\alpha}P_{\alpha}^{2}+cnH+\sum_{2\leq\alpha\leq d}\sum_{m\in I_{1}}n_{\alpha}\mu_{\alpha}\Gamma_{\alpha\alpha}^{m}\Gamma_{11}^{m}\nonumber\\
&=2\sum_{2\leq\alpha\leq d}n_{\alpha}\mu_{\alpha}P_{\alpha}^{2}+cnH+\sum_{m\in I_{1}}\Big(\sum_{2\leq\alpha\leq d}n_{\alpha}e_m(\mu_{\alpha})\Big)\Gamma_{11}^{m}\nonumber\\
&=2\sum_{2\leq\alpha\leq d}n_{\alpha}\mu_{\alpha}P_{\alpha}^{2}+cnH,
\end{align*}
which is equivalent to
\begin{align}\label{t2}
\sum_{2\leq\alpha\leq d}n_{\alpha}\mu_{\alpha}P_{\alpha}^{2}=-\frac{1}{2}cnH.
\end{align}
Equations \eqref{t1} and \eqref{t2} imply that \eqref{t3} holds for $q=1,2$. Next we will prove that it holds for general $q$ by induction.

Differentiating \eqref{t3} with respect to $e_{1}$ yields
\begin{align}\label{t7}
0&=\sum_{2\leq\alpha\leq d}n_{\alpha}\Big(e_{1}(\mu_{\alpha})P_{\alpha}^{q}+q\mu_{\alpha}P_{\alpha}^{q-1}e_{1}(P_{\alpha})\Big)\nonumber\\
&=\sum_{2\leq\alpha\leq d}n_{\alpha}\Big(\mu_{\alpha}P_{\alpha}^{q+1}+q\mu_{\alpha}P_{\alpha}^{q-1}(P_{\alpha}^{2}+c+\sum_{m\in I_{1}}\Gamma_{\alpha\alpha}^{m}\Gamma_{11}^{m})\Big)\nonumber\\
&=\sum_{2\leq\alpha\leq d}\Big((1+q)n_{\alpha}\mu_{\alpha}P_{\alpha}^{q+1}+cqn_{\alpha}\mu_{\alpha}P_{\alpha}^{q-1}\Big)+
q\sum_{2\leq\alpha\leq d}n_{\alpha}P_{\alpha}^{q-1}\sum_{m\in I_{1}}e_{m}(\mu_{\alpha})\Gamma_{11}^{m}\nonumber\\
&=(1+q)f(q+1)+cqf(q-1)+q\sum_{m\in I_{1}}\Big\{\sum_{2\leq\alpha\leq d}n_{\alpha}e_{m}(\mu_{\alpha})P_{\alpha}^{q-1}\Big\}\Gamma_{11}^{m}.
\end{align}
On the other hand, since  \eqref{t3} holds for $f(q-1)$,  we differentiate $f(q-1)=\sum_{2\leq\alpha\leq d}n_{\alpha}\mu_{\alpha}P^{q-1}_{\alpha}$ with respect to $e_{j}$ for $j\in I_1$ and $j\neq1$.
It follows from \eqref{F2} that
\begin{align}
0&=\sum_{2\leq\alpha\leq d}\Big(n_{\alpha}e_{j}(\mu_{\alpha})P_{\alpha}^{q-1}+(q-1)n_{\alpha}\mu_{\alpha}P_{\alpha}^{q-2}e_{j}(P_{\alpha})\Big)\nonumber\\
&=\sum_{2\leq\alpha\leq d}\Big(n_{\alpha}e_{j}(\mu_{\alpha})P_{\alpha}^{q-1}+(q-1)n_{\alpha}\mu_{\alpha}P_{\alpha}^{q-2}\big(\Gamma_{\alpha\alpha}^{j}P_{\alpha}-\sum_{m\in I_1}\Gamma_{\alpha\alpha}^{m}\Gamma_{jm}^1)\Big)\nonumber\\
&=\sum_{2\leq\alpha\leq d}\Big(n_{\alpha}e_{j}(\mu_{\alpha})P_{\alpha}^{q-1}+(q-1)n_{\alpha}e_{j}(\mu_{\alpha})P_{\alpha}^{q-1}-(q-1)\sum_{m\in I_1}n_{\alpha}P_{\alpha}^{q-2}e_{m}(\mu_{\alpha})\Gamma_{jm}^{1}\Big)\nonumber\\
&=\sum_{2\leq\alpha\leq d}\Big(q n_{\alpha}e_{j}(\mu_{\alpha})P_{\alpha}^{q-1}-(q-1)\sum_{m\in I_1}n_{\alpha}P_{\alpha}^{q-2}e_{m}(\mu_{\alpha})\Gamma_{jm}^{1}\Big).\nonumber
\end{align}
Hence the following relation holds for any $j\in I_1$ and $j\neq 1$
\begin{align}\label{T1}
q\sum_{2\leq\alpha\leq d}n_{\alpha}e_{j}(\mu_{\alpha})P_{\alpha}^{q-1}=\sum_{m\in I_1}\Big\{(q-1)\sum_{2\leq\alpha\leq d}n_{\alpha}e_{m}(\mu_{\alpha})P_{\alpha}^{q-2}\Big\}\Gamma_{jm}^{1}.
\end{align}
Since  \eqref{T1} holds for any $q$,  letting $q=2$, \eqref{T1} reduces to
\begin{align*}
2\sum_{2\leq\alpha\leq d}n_{\alpha}e_{j}(\mu_{\alpha})P_{\alpha}=\sum_{m\in I_1}\Big\{\sum_{2\leq\alpha\leq d}n_{\alpha}e_{m}(\mu_{\alpha})\Big\}\Gamma_{jm}^{1},
\end{align*}
which together with \eqref{FF1} yields
\begin{align}\label{TT1}
2\sum_{2\leq\alpha\leq d}n_{\alpha}e_{j}(\mu_{\alpha})P_{\alpha}=0.
\end{align}
Letting $q=3$ and using \eqref{TT1}, \eqref{T1} reduces to
\begin{align*}
3\sum_{2\leq\alpha\leq d}n_{\alpha}e_{j}(\mu_{\alpha})P_{\alpha}^2=\sum_{m\in I_1}\Big\{2\sum_{2\leq\alpha\leq d}n_{\alpha}e_{m}(\mu_{\alpha})P_{\alpha}\Big\}\Gamma_{jm}^{1}=0.
\end{align*}
Similarly, we can gradually show
\begin{align}\label{t8}
q\sum_{2\leq\alpha\leq d}n_{\alpha}e_{j}(\mu_{\alpha})P_{\alpha}^{q-1}=0.
\end{align}
Hence, combing \eqref{t7} with \eqref{t8} gives
\begin{align}\label{t9}
(q+1)f(q+1)+cqf(q-1)=0.
\end{align}

When $q$ is even, both of $q-1$ and $q+1$ are odd.  From \eqref{t9}, $f(q-1)=0$ can yield $f(q+1)=0$ as well.

When $q$ is odd, both of $q-1$ and $q+1$ are even. We conclude from \eqref{t9} that
\begin{align}\label{t10}
f(q+1)&=-\frac{cq}{q+1}f(q-1)\nonumber\\
&=-\frac{cq}{q+1}\times\frac{(q-2)!!}{(q-1)!!}(-c)^{(q-1)/2}nH\nonumber\\
&=\frac{q!!}{(q+1)!!}(-c)^{(q+1)/2}nH,
\end{align}
which completes the proof of Lemma \ref{lemma3}.
\end{proof}
\begin{remark}
While assuming an additional condition that the multiplicity of zero principal curvature is at most one, special cases of Lemmas \ref{lemma2} and \ref{lemma3} were derived in \cite{chenguang2}.
\end{remark}
\begin{lemma}\label{lemma4}
The equation ${\rm \Delta}\, S+S^2-ncS-n^2cH^2=0$  is equivalent to
\begin{align*}
&-6\sum_{\alpha=2}^{d}n_{\alpha}\mu_{\alpha}^{2}P_{\alpha}^{2}
+2\Big(\sum_{\alpha=2}^{d}n_{\alpha}\mu_{\alpha}^{2}P_{\alpha}\Big)\Big(\sum_{\alpha=2}^{d}n_{\alpha}P_{\alpha}+\sum_{m\in I_{1}}\Gamma_{mm}^{1}\Big)
+\Big(\sum_{\alpha=2}^{d}n_{\alpha}\mu_{\alpha}^{2}\Big)^{2}\\&-(n+2)c\Big(\sum_{\alpha=2}^{d}n_{\alpha}\mu_{\alpha}^{2}\Big)
-c\Big(\sum_{\alpha=2}^{d}n_{\alpha}\mu_{\alpha}\Big)^{2}=0.
\end{align*}
 \end{lemma}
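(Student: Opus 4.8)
The plan is to compute $\Delta S$ explicitly in the adapted orthonormal frame $\{e_i\}$ (with $e_1$ parallel to $\nabla S$ and $\lambda_1=0$) and then substitute the result into the first equation of \eqref{triharmonic condition}. Since $\lambda_1=0$ and $\mu_\alpha$ has multiplicity $n_\alpha$, we have $S=\sum_{\alpha=2}^{d}n_\alpha\mu_\alpha^{2}$ and $nH=\sum_{\alpha=2}^{d}n_\alpha\mu_\alpha$. Because $\nabla S\parallel e_1$ on $\mathcal N$, the function $S$ obeys $e_j(S)\equiv 0$ for every $j\ge 2$, while
\[
e_1(S)=2\sum_{\alpha=2}^{d}n_\alpha\mu_\alpha\,e_1(\mu_\alpha)=2\sum_{\alpha=2}^{d}n_\alpha\mu_\alpha^{2}P_\alpha ,
\]
using $e_1(\mu_\alpha)=\mu_\alpha P_\alpha$.

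Writing $\Delta=-\mathrm{div}\,\nabla$ (the convention under which \eqref{triharmonic condition} is stated), we have $\Delta S=-\sum_i\bigl(e_i(e_iS)-(\nabla_{e_i}e_i)S\bigr)$. Since $e_j(S)\equiv0$ for $j\ge 2$, the first sum collapses to $e_1(e_1S)$ and the second to $\bigl(\sum_i\Gamma_{ii}^{1}\bigr)e_1(S)$. Here Lemma \ref{lemma1}(4a) gives $\Gamma_{11}^{1}=0$, and Lemma \ref{lemma1}(2) with $\lambda_1=0$ gives $\Gamma_{ii}^{1}=e_1(\mu_\alpha)/\mu_\alpha=P_\alpha$ for $i\in I_\alpha$, $\alpha\ge 2$; therefore $\sum_i\Gamma_{ii}^{1}=\sum_{\alpha=2}^{d}n_\alpha P_\alpha+\sum_{m\in I_1}\Gamma_{mm}^{1}$, which is exactly the bracketed factor in the claimed identity. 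To handle $e_1(e_1S)$, differentiate the formula for $e_1(S)$ once more and use \eqref{F1} to replace $e_1(P_\alpha)$ by $P_\alpha^{2}+c+\sum_{m\in I_1}\Gamma_{\alpha\alpha}^{m}\Gamma_{11}^{m}$; combined with $\mu_\alpha\Gamma_{\alpha\alpha}^{m}=e_m(\mu_\alpha)$ (Lemma \ref{lemma1}(2)), this yields
\[
e_1(e_1S)=6\sum_{\alpha=2}^{d}n_\alpha\mu_\alpha^{2}P_\alpha^{2}+2cS+2\sum_{m\in I_1}\Gamma_{11}^{m}\Bigl(\sum_{\alpha=2}^{d}n_\alpha\mu_\alpha e_m(\mu_\alpha)\Bigr).
\]

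The decisive simplification is that the last term vanishes: for $m\ne 1$ one has $\sum_{\alpha=2}^{d}n_\alpha\mu_\alpha e_m(\mu_\alpha)=\frac{1}{2}e_m(S)=0$ because $\nabla S\parallel e_1$, and the $m=1$ term is killed by $\Gamma_{11}^{1}=0$. Hence $e_1(e_1S)=6\sum_{\alpha=2}^{d}n_\alpha\mu_\alpha^{2}P_\alpha^{2}+2cS$, so that
\[
\Delta S=-6\sum_{\alpha=2}^{d}n_\alpha\mu_\alpha^{2}P_\alpha^{2}-2cS+2\Bigl(\sum_{\alpha=2}^{d}n_\alpha\mu_\alpha^{2}P_\alpha\Bigr)\Bigl(\sum_{\alpha=2}^{d}n_\alpha P_\alpha+\sum_{m\in I_1}\Gamma_{mm}^{1}\Bigr).
\]
Adding $S^{2}-ncS-n^{2}cH^{2}$, combining $-2cS-ncS=-(n+2)cS$, and rewriting $S=\sum_{\alpha=2}^{d}n_\alpha\mu_\alpha^{2}$, $nH=\sum_{\alpha=2}^{d}n_\alpha\mu_\alpha$ produces precisely the asserted equation; since every step is reversible, this gives the stated equivalence.

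I do not expect a genuine obstacle here: granted Lemmas \ref{lemma1} and \ref{lemma2}, the argument is a careful bookkeeping computation. The two places needing attention are the identity $e_j(S)\equiv 0$ for $j\ge 2$ on $\mathcal N$ — which is what prevents extra terms in $\Delta S$ and also forces the $\Gamma_{11}^{m}$-terms to drop out — and maintaining throughout the same sign convention for $\Delta$ as in \eqref{triharmonic condition}.
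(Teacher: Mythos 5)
Your proposal is correct and follows essentially the same route as the paper: compute $\Delta S=-e_1e_1S+e_1(S)\sum_i\Gamma_{ii}^{1}$ using $e_j(S)=0$ for $j\ge 2$, evaluate $e_1e_1S$ via \eqref{F1}, and kill the $\Gamma_{11}^{m}$-terms by $e_m(S)=0$ (for $m\in I_1$, $m\ne 1$) together with $\Gamma_{11}^{1}=0$, exactly as in \eqref{eqsystem6}--\eqref{FFF2}. The final substitution into $\Delta S+S^{2}-ncS-n^{2}cH^{2}=0$ is likewise identical, so there is nothing to add.
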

 \begin{proof}
Since $e_i(S)=0$ for $2\leq i\leq n$, it follows from Lemma \ref{lemma1} that
\begin{align}\label{eqsystem6}
\Delta S&=-\sum_{i=1}^{n}(\nabla_{e_{i}}\nabla_{e_{i}}S-
\nabla_{\nabla_{e_{i}e_{i}}}S)\nonumber\\
&=-e_{1}e_{1}S+e_{1}(S)\sum_{i=2}^{n}\Gamma_{ii}^{1}\nonumber\\
&=-e_{1}e_{1}S+e_{1}(S)\Big(\sum_{\alpha=2}^{d}n_{\alpha}P_{\alpha}+\sum_{m\in I_{1}}\Gamma_{mm}^{1}\Big).
\end{align}
Noting $S=\sum_{\alpha=2}^{d}n_{\alpha}\mu_{\alpha}^{2}$, it follows from \eqref{F1} that
\begin{align}\label{eqsystem7}
e_{1}(S)&=2\sum_{\alpha=2}^{d}n_{\alpha}\mu_{\alpha}e_{1}(\mu_{\alpha})=2\sum_{\alpha=2}^{d}n_{\alpha}\mu_{\alpha}^{2}P_{\alpha},\\
e_{1}e_{1}(S)&=4\sum_{\alpha=2}^{d}n_{\alpha}\mu_{\alpha}^{2}P_{\alpha}^{2}+
2\sum_{\alpha=2}^{d}n_{\alpha}\mu_{\alpha}^{2}P_{\alpha}^{2}+2c\sum_{\alpha=2}^{d}n_{\alpha}\mu_{\alpha}^{2}+2\sum_{\alpha=2}^{d}\sum_{m\in I_{1}}n_{\alpha}\mu_{\alpha}^{2}\Gamma_{\alpha\alpha}^{m}\Gamma_{11}^{m}\nonumber\\
&=
6\sum_{\alpha=2}^{d}n_{\alpha}\mu_{\alpha}^{2}P_{\alpha}^{2}+2c\sum_{\alpha=2}^{d}n_{\alpha}\mu_{\alpha}^{2}+2\sum_{m\in I_{1}}\Big\{\sum_{\alpha=2}^{d}n_{\alpha}\mu_{\alpha}^{2}\Gamma_{\alpha\alpha}^{m}\Big\}\Gamma_{11}^{m}.\label{FFF0}
\end{align}
Differentiating $S=\sum_{\alpha=2}^{d}n_{\alpha}\mu_{\alpha}^{2}$ with respect to $e_m$ for $m\in I_1$ and $m\neq1$, we get
\begin{align}\label{FFF1}
0=e_m(S)=2\sum_{\alpha=2}^{d}n_{\alpha}\mu_{\alpha}e_m(\mu_{\alpha})=2\sum_{\alpha=2}^{d}n_{\alpha}\mu_{\alpha}^2\Gamma_{\alpha\alpha}^{m}.
\end{align}
Combining \eqref{FFF0} with \eqref{FFF1} gives
\begin{align}\label{FFF2}
e_{1}e_{1}(S)=6\sum_{\alpha=2}^{d}n_{\alpha}\mu_{\alpha}^{2}P_{\alpha}^{2}+2c\sum_{\alpha=2}^{d}n_{\alpha}\mu_{\alpha}^{2}.
\end{align}
Substituting \eqref{eqsystem7} and \eqref{FFF2} into  \eqref{eqsystem6}, we have
\begin{align*}
\Delta S=&-6\sum_{\alpha=2}^{d}n_{\alpha}\mu_{\alpha}^{2}P_{\alpha}^{2}+
2\Big(\sum_{\alpha=2}^{d}n_{\alpha}\mu_{\alpha}^{2}P_{\alpha}\Big)\Big(\sum_{\alpha=2}^dn_{\alpha}P_{\alpha}+\sum_{m\in I_{1}}\Gamma_{mm}^{1}\Big)\nonumber\\
&-2c\sum_{\alpha=2}^{d}n_{\alpha}\mu_{\alpha}^{2}.
\end{align*}
Hence, the proof has been completed.
\end{proof}

\section{Proof of Theorems \ref{mainth1} and \ref{mainth2}}

{\bf The proof of Theorems \ref{mainth1} and \ref{mainth2}}:
We will prove Theorems \ref{mainth1} and \ref{mainth2} by deriving  a contradiction from the assumption that
$\mathcal{N}=\{p\in M^n:\nabla S(p)\neq 0\}\neq\emptyset$.

Taking $q=1,3,5,\cdots,2d-3$ in Lemma \ref{lemma3}, we have
\begin{align}\label{eqsystem0}
\begin{split}
\left \{
\begin{array}{ll}
    n_{2}\mu_{2}P_{2}+n_{3}\mu_{3}P_{3}+\cdots+n_{d}\mu_{d}P_{d}=0,\\
    n_{2}\mu_{2}P_{2}^{3}+n_{3}\mu_{3}P_{3}^{3}+\cdots+n_{d}\mu_{d}P_{d}^{3}=0,\\
\quad \,\vdots\\
   n_{2}\mu_{2}P_{2}^{2d-3}+n_{3}\mu_{3}P_{3}^{2d-3}+\cdots+n_{d}\mu_{d}P_{d}^{2d-3}=0,
\end{array}
\right.
\end{split}
\end{align}
which is a $(d-1)$-th order equation system with a non-zero solution. Hence on $\mathcal{N}$ we have
\begin{align}\label{det1}
\begin{split}
\left |
\begin{array}{llll}
    P_{2} &P_{3} &\cdots &P_{d}\\
    P_{2}^{3} &P_{3}^{3} &\cdots &P_{d}^{3}\\
\,\,\vdots&\,\,\vdots&\cdots&\,\,\vdots\\
  P_{2}^{2d-3} &P_{3}^{2d-3}&\cdots &P_{d}^{2d-3}
\end{array}
\right |=P_{2}\cdots P_{d}\prod_{2\leq\alpha\leq\beta\leq d}(P_{\alpha}^{2}-P_{\beta}^{2})=0.
\end{split}
\end{align}

Taking $q=2,4,6,\cdots,2d-2$ in Lemma \ref{lemma3}, we obtain
\begin{align}\label{eqsystem3}
\begin{split}
\left \{
\begin{array}{ll}
    n_{2}\mu_{2}P_{2}^{2}+n_{3}\mu_{3}P_{3}^{2}+\cdots+n_{d}\mu_{d}P_{d}^{2}=-\frac{1}{2}ncH,\\
    n_{2}\mu_{2}P_{2}^{4}+n_{3}\mu_{3}P_{3}^{4}+\cdots+n_{d}\mu_{d}P_{d}^{4}=\frac{3}{8}nc^{2}H,\\
\quad\,\vdots\\
   n_{2}\mu_{2}P_{2}^{2d-2}+n_{3}\mu_{3}P_{3}^{2d-2}+\cdots+n_{d}\mu_{d}P_{d}^{2d-2}=\frac{(2d-3)!!}{(2d-2)!!}(-c)^{d-1}nH.
\end{array}
\right.
\end{split}
\end{align}

Next we consider all possible cases.

{\bf Case 1.} $P_{2}P_{3}\cdots P_{d}\neq0$ at some $p\in\mathcal{N}$. Then from \eqref{det1} we have that
$\prod_{2\leq\alpha\leq\beta\leq d}(P_{\alpha}^{2}-P_{\beta}^{2})=0$ at $p$.
Without loss of generality, we assume $P_{2}^{2}-P_{3}^{2}=0$, i.e., $P_{2}=\pm P_{3}$.
 Now the former $d-2$ equations in the system of equations \eqref{eqsystem0} determine a new system of equations as follows:
\begin{align}\label{eqsystem2}
\begin{split}
\left \{
\begin{array}{ll}
   (n_{2}\mu_{2}\pm n_{3}\mu_{3})P_{3}+\cdots+n_{d}\mu_{d}P_{d}=0,\\
   (n_{2}\mu_{2}\pm n_{3}\mu_{3})P_{3}^{3}+\cdots+n_{d}\mu_{d}P_{d}^{3}=0,\\
\quad\quad\quad\vdots\\
  (n_{2}\mu_{2}\pm n_{3}\mu_{3})P_{3}^{2d-5}+\cdots+n_{d}\mu_{d}P_{d}^{2d-5}=0,
\end{array}
\right.
\end{split}
\end{align}
which has a non-zero solution. Then we have
\begin{align*}
\begin{split}
\left |
\begin{array}{llll}
   P_{3} &P_{4}& \cdots &P_{d}\\
     P_{3}^{3}&P_{4}^{3}&\cdots &P_{d}^{3}\\
\,\,\vdots&\,\,\vdots&\cdots&\,\,\vdots\\
  P_{3}^{2d-5} &P_{4}^{2d-5} & \cdots &P_{d}^{2d-5}
\end{array}
\right |=P_{3}\cdots P_{d}\prod_{3\leq\alpha\leq\beta\leq d}(P_{\alpha}^{2}-P_{\beta}^{2})=0.
\end{split}
\end{align*}
Since $P_{3}P_4\cdots P_{d}\neq0$, we have that $\prod_{3\leq\alpha\leq\beta\leq d}(P_{\alpha}^{2}-P_{\beta}^{2})=0$.
Without loss of generality,
we assume that $P_{3}^{2}=P_{4}^{2}$.
Proceeding in this way, we obtain that
$P_{2}^{2}=P_{3}^{2}=\cdots=P_{d}^{2}:=P^2$ at $p$.
 Now \eqref{eqsystem3} becomes
\begin{align*}
\begin{split}
\left \{
\begin{array}{ll}
    -\frac{1}{2}ncH=n_{2}\mu_{2}P_{2}^{2}+n_{3}\mu_{3}P_{3}^{2}+\cdots+n_{d}\mu_{d}P_{d}^{2}=nHP^{2},\\
    \frac{3}{8}nc^{2}H=n_{2}\mu_{2}P_{2}^{4}+n_{3}\mu_{3}P_{3}^{4}+\cdots+n_{d}\mu_{d}P_{d}^{4}=nHP^{4},\\
\quad\,\,\vdots\\
   \frac{(2d-3)!!}{(2d-2)!!}(-c)^{d-1}nH=n_{2}\mu_{2}P_{2}^{2d-2}+n_{3}\mu_{3}P_{3}^{2d-2}+\cdots+n_{d}\mu_{d}P_{d}^{2d-2}=nHP^{2d-2}.
\end{array}
\right.
\end{split}
\end{align*}
Since $nH\neq0$, the first two equations of the above system  imply that
$$P^2=-\frac{1}{2}c \quad {\rm and}\quad P^4=\frac{3}{8}c^2,$$
and hence $c=P=0$. It is a contradiction, so this case is ruled out.

{\bf Case 2.} $P_{\alpha}=0$ for all $\alpha=2,\cdots,d$ at some $p\in\mathcal{N}$. In this case, we have
$$e_{1}S=e_{1}\sum_{2\leq\alpha\leq d}n_{\alpha}\mu_{\alpha}^{2}=2\sum_{2\leq\alpha\leq d}n_{\alpha}\mu_{\alpha}e_{1}(\mu_{\alpha})=2\sum_{2\leq\alpha\leq d}n_{\alpha}\mu_{\alpha}^{2}P_{\alpha}=0.$$
 This contradicts $\nabla S\neq0$ at $p$ and this case is also ruled out.

{\bf Case 3.} For any given point $p\in\mathcal{N}$, some terms of  $P_{\alpha}$ are zero and the others are not zero. In this case, without loss of generality,
assume $P_{\alpha}=0$ for $\alpha=2,\cdots, r$ and $P_{\alpha}\neq 0$ for $\alpha=r+1,\cdots,d$. Then the first $d-r$ equations in \eqref{eqsystem0} form a new system of equations
\begin{align}\label{eqsystem}
\begin{split}
\left \{
\begin{array}{ll}
    n_{r+1}\mu_{r+1}P_{r+1}+n_{r+2}\mu_{r+2}P_{r+2}+\cdots+n_{d}\mu_{d}P_{d}=0,\\
    n_{r+1}\mu_{r+1}P_{r+1}^{3}+n_{r+2}\mu_{r+2}P_{r+2}^{3}+\cdots+n_{d}\mu_{d}P_{d}^{3}=0,\\
\quad\quad\quad\vdots\\
   n_{r+1}\mu_{r+1}P_{r+1}^{2(d-r)-1}+n_{r+2}\mu_{r+2}P_{r+2}^{2(d-r)-1}+\cdots+n_{d}\mu_{d}P_{d}^{2(d-r)-1}=0,
\end{array}
\right.
\end{split}
\end{align}
which is a $(d-r)$-th order equation system with non-zero solutions. So the coefficient determinant is zero, that is $\prod_{r+1\leq\alpha\leq\beta\leq d}(P_{\alpha}^{2}-P_{\beta}^{2})=0$. Without loss of generality,
we assume that $P_{r+1}^{2}=P_{r+2}^{2}$.  Proceeding in this way, we can show that
$P_{r+1}^{2}=\cdots=P_{d}^{2}\neq0$.  Denote by $P^2:=P_{r+1}^{2}=\cdots=P_{d}^{2}$. Then  \eqref{eqsystem3} becomes
\begin{align}\label{eqsystem16}
\begin{split}
\left \{
\begin{array}{ll}
   (n_{r+1}\mu_{r+1}+\cdots+n_{d}\mu_{d})P^{2}=-\frac{1}{2}ncH,\\
  (n_{r+1}\mu_{r+1}+\cdots+n_{d}\mu_{d})P^{4}=\frac{3}{8}nc^{2}H,\\
    (n_{r+1}\mu_{r+1}+\cdots+n_{d}\mu_{d})P^{6}=-\frac{5}{16}nc^{3}H,\\
\quad\quad\quad\vdots\\
(n_{r+1}\mu_{r+1}+\cdots+n_{d}\mu_{d})P^{2d-2}=\frac{(2d-3)!!}{(2d-2)!!}(-c)^{d-1}nH.
\end{array}
\right.
\end{split}
\end{align}
The above system of equations means that $n_{r+1}\mu_{r+1}+\cdots+n_{d}\mu_{d}\neq0$ since $c\neq0$ and $H\neq0$. The first two equations of  \eqref{eqsystem16} force that $P^{2}=-\frac{3}{4}c$, and the second and the third equation of \eqref{eqsystem16} force that $P^{2}=-\frac{5}{6}c$. Hence we have $P=c=0$, a contradiction.

In conclusion, we have that $\mathcal{N}=\{p\in M:\nabla S(p)\neq 0\}$ is empty and hence $S$ has to be a constant.
From \eqref{scar}, we conclude that the scalar curvature $R$ of $M^n$ is constant as well. But for $c<0$, the first equation of \eqref{triharmonic condition} means that $S^2-ncS-n^2cH^2=0$, a contradiction. This completes the proof of Theorems \ref{mainth1} and \ref{mainth2}. $\hfill\square$

\section{Proofs of Theorems \ref{mainth3} and \ref{mainth4}}

In this section, we mainly concern CMC triharmonic hypersurfaces in the Euclidean space $\mathbb R^{n+1}$. For any dimension $n$, we need another assumption that the multiplicity of  the zero principal curvature is at most one as discussed in \cite{chenguang2} for four distinct principal curvatures.

{\bf The proof of Theorem \ref{mainth3}}:
Assume that $\mathcal{N}=\{p\in M^n:\nabla S(p)\neq 0\}\neq\emptyset$. We will prove Theorems \ref{mainth3} by deriving  a contradiction.

Taking $q=1,2,3,\cdots,d-1$ in Lemma \ref{lemma3}, we have
\begin{align}\label{eqsystem04}
\begin{split}
\left \{
\begin{array}{ll}
    n_{2}\mu_{2}P_{2}+n_{3}\mu_{3}P_{3}+\cdots+n_{d}\mu_{d}P_{d}=0,\\
    n_{2}\mu_{2}P_{2}^{2}+n_{3}\mu_{3}P_{3}^{2}+\cdots+n_{d}\mu_{d}P_{d}^{2}=0,\\
\quad \,\vdots\\
   n_{2}\mu_{2}P_{2}^{d-1}+n_{3}\mu_{3}P_{3}^{d-1}+\cdots+n_{d}\mu_{d}P_{d}^{d-1}=0,
\end{array}
\right.
\end{split}
\end{align}
which is a $(d-1)$-th order equation system with a non-zero solution. Hence on $\mathcal{N}$ we have
\begin{align}\label{det01}
\begin{split}
\left |
\begin{array}{llll}
    P_{2} &P_{3} &\cdots &P_{d}\\
    P_{2}^{2} &P_{3}^{2} &\cdots &P_{d}^{2}\\
\,\,\vdots&\,\,\vdots&\cdots&\,\,\vdots\\
  P_{2}^{d-1} &P_{3}^{d-1}&\cdots &P_{d}^{d-1}
\end{array}
\right |=P_{2}\cdots P_{d}\prod_{2\leq\alpha\leq\beta\leq d}(P_{\alpha}-P_{\beta})=0.
\end{split}
\end{align}

Next we consider three possible cases.

{\bf Case 1.} $P_{2}P_{3}\cdots P_{d}\neq0$ at some $p\in\mathcal{N}$. Then from \eqref{det01} we have that
$\prod_{2\leq\alpha\leq\beta\leq d}(P_{\alpha}-P_{\beta})=0$ at $p$.
Without loss of generality, we assume $P_{2}-P_{3}=0$.
 Now the former $d-2$ equations in the system of equations \eqref{eqsystem04} determine a new system of equations as follows:
\begin{align}\label{eqsystem02}
\begin{split}
\left \{
\begin{array}{ll}
   (n_{2}\mu_{2}+ n_{3}\mu_{3})P_{3}+\cdots+n_{d}\mu_{d}P_{d}=0,\\
   (n_{2}\mu_{2}+ n_{3}\mu_{3})P_{3}^{2}+\cdots+n_{d}\mu_{d}P_{d}^{2}=0,\\
\quad\quad\quad\vdots\\
  (n_{2}\mu_{2}+ n_{3}\mu_{3})P_{3}^{d-2}+\cdots+n_{d}\mu_{d}P_{d}^{d-2}=0,
\end{array}
\right.
\end{split}
\end{align}
which has a non-zero solution. Then we have
\begin{align*}
\begin{split}
\left |
\begin{array}{llll}
   P_{3} &P_{4}& \cdots &P_{d}\\
     P_{3}^{2}&P_{4}^{2}&\cdots &P_{d}^{2}\\
\,\,\vdots&\,\,\vdots&\cdots&\,\,\vdots\\
  P_{3}^{d-2} &P_{4}^{d-2} & \cdots &P_{d}^{d-2}
\end{array}
\right |=P_{3}\cdots P_{d}\prod_{3\leq\alpha\leq\beta\leq d}(P_{\alpha}-P_{\beta})=0.
\end{split}
\end{align*}
Since $P_{3}P_{4}\cdots P_{d}\neq0$, we have that $\prod_{3\leq\alpha\leq\beta\leq d}(P_{\alpha}-P_{\beta})=0$.
Without loss of generality,
we assume that $P_{3}=P_{4}$.
Similarly, we obtain that
$P_{3}=P_{4}=\cdots=P_{d}:=P$ at $p$.
Since $nH\neq0$, the first equation of the above system \eqref{eqsystem02} implies that
$$\Big(\sum_{2\leq\alpha\leq d}{n_{\alpha}\mu_{\alpha}}\Big)P=nHP=0$$
and hence $P=0$. It is a contradiction.

{\bf Case 2.} $P_{\alpha}=0$ for all $\alpha=2,\cdots,d$ at some $p\in\mathcal{N}$. Then
$$e_{1}S=e_{1}\sum_{2\leq\alpha\leq d}n_{\alpha}\mu_{\alpha}^{2}=2\sum_{2\leq\alpha\leq d}n_{\alpha}\mu_{\alpha}e_{1}(\mu_{\alpha})=2\sum_{2\leq\alpha\leq d}n_{\alpha}\mu_{\alpha}^{2}P_{\alpha}=0,$$ which contradicts $\nabla S\neq0$ at $p$.

{\bf Case 3.} For any given point $p\in\mathcal{N}$, some terms of  $P_{\alpha}$ are zero and the others are not zero. In this case, without loss of generality,
assume $P_{\alpha}=0$ for $\alpha=2,\cdots, r$ and $P_{\alpha}\neq 0$ for $\alpha=r+1,\cdots,d$. Then the first $d-r$ equations in \eqref{eqsystem04} form a new system of equations
\begin{align}\label{eqsystem00}
\begin{split}
\left \{
\begin{array}{ll}
    n_{r+1}\mu_{r+1}P_{r+1}+n_{r+2}\mu_{r+2}P_{r+2}+\cdots+n_{d}\mu_{d}P_{d}=0,\\
    n_{r+1}\mu_{r+1}P_{r+1}^{2}+n_{r+2}\mu_{r+2}P_{r+2}^{2}+\cdots+n_{d}\mu_{d}P_{d}^{2}=0,\\
\quad\quad\quad\vdots\\
   n_{r+1}\mu_{r+1}P_{r+1}^{d-r}+n_{r+2}\mu_{r+2}P_{r+2}^{d-r}+\cdots+n_{d}\mu_{d}P_{d}^{d-r}=0,
\end{array}
\right.
\end{split}
\end{align}
which is a $(d-r)$-th order equation system with non-zero solutions. Thus the coefficient determinant is zero, that is $\prod_{r+1\leq\alpha\leq\beta\leq d}(P_{\alpha}-P_{\beta})=0$. Without loss of generality,
we assume that $P_{r+1}=P_{r+2}$.  Similar discussion as the above yields
$P_{r+1}=\cdots=P_{d}\neq0$.  Denote by $P:=P_{r+1}=\cdots=P_{d}$.

It follows from \eqref{eqsystem00} that $n_{r+1}\mu_{r+1}+\cdots+n_{d}\mu_{d}=0$, because of $P\neq0$. In this case, we have $n_{2}\mu_{2}+\cdots n_{r}\mu_{r}=nH$.
Therefore we deduce from Lemma \ref{lemma4} that
\begin{align}
&-6\sum_{\alpha=2}^{d}n_{\alpha}\mu_{\alpha}^{2}P_{\alpha}^{2}
+2\Big(\sum_{\alpha=2}^{d}n_{\alpha}\mu_{\alpha}^{2}P_{\alpha}\Big)\Big(\sum_{\alpha=2}^{d}n_{\alpha}P_{\alpha}+\sum_{m\in I_{1}}\Gamma_{mm}^{1}\Big)+\Big(\sum_{\alpha=2}^{d}n_{\alpha}\mu_{\alpha}^{2}\Big)^{2}=0,\nonumber
\end{align}
that is
\begin{align}\label{eqsystem180}
&2\Big(\sum_{\alpha=r+1}^{d}n_{\alpha}-3\Big)P^{2}\sum_{\alpha=r+1}^{d}n_{\alpha}\mu_{\alpha}^{2}+
2\Big(\sum_{\alpha=r+1}^{d}n_{\alpha}\mu_{\alpha}^{2}\Big)P\sum_{m\in I_{1}}\Gamma_{mm}^{1}\nonumber\\
&+\Big(\sum_{\alpha=2}^{d}n_{\alpha}\mu_{\alpha}^{2}\Big)^{2}=0,
\end{align}
where we have used $P_{\alpha}=0$ for $\alpha=2,\ldots, r$.

Because we have assumed that the multiplicity of the zero principal curvature is one, i.e. $I_1=\{1\}$, \eqref{eqsystem180} becomes
\begin{align}\label{eqsystem181}
2\Big(\sum_{\alpha=r+1}^{d}n_{\alpha}-3\Big)P^{2}\sum_{\alpha=r+1}^{d}n_{\alpha}\mu_{\alpha}^{2}
+\Big(\sum_{\alpha=2}^{d}n_{\alpha}\mu_{\alpha}^{2}\Big)^{2}=0.
\end{align}
Differentiating with respect to $e_{1}$ on both sides of equation \eqref{eqsystem181}, it follows from \eqref{F1} that
\begin{align}\label{eqsystem19}
8\Big(\sum_{\alpha=r+1}^{d}n_{\alpha}-3\Big)P^{3}\sum_{\alpha=r+1}^{d}n_{\alpha}\mu_{\alpha}^{2}
+4\Big(\sum_{\alpha=2}^{d}n_{\alpha}\mu_{\alpha}^{2}\Big)\Big(\sum_{\alpha=r+1}^{d}n_{\alpha}\mu_{\alpha}^{2}\Big)P=0.
\end{align}
Dividing \eqref{eqsystem19} by $4P$, and then subtracting  \eqref{eqsystem181}, we have
\begin{align}\label{eqsystem20}
\Big(\sum_{\alpha=2}^{d}n_{\alpha}\mu_{\alpha}^{2}\Big)\Big(\sum_{\alpha=2}^{r}n_{\alpha}\mu_{\alpha}^{2}\Big)=0.
\end{align}
Since $S=\sum_{\alpha=2}^{d}n_{\alpha}\mu_{\alpha}^{2}\neq 0$, it follows from \eqref{eqsystem20} that
$\sum_{\alpha=2}^{r}n_{\alpha}\mu_{\alpha}^{2}=0$, and hence $\mu_{\alpha}=0$ for $\alpha=2,\cdots,r$.
This is a contradiction since $\sum_{\alpha=2}^{r}n_{\alpha}\mu_{\alpha}=nH\neq0$. $\hfill\square$\\

{\bf The proof of Theorem \ref{mainth4}}:
Let us restrict the case of a CMC triharmonic hypersurface $M^5$ in $\mathbb R^{6}$. We will still work on $\mathcal{N}=\{p\in M^5:\nabla S(p)\neq 0\}\neq\emptyset$ and prove Theorem \ref{mainth4} with a contradiction.

When the multiplicity of zero principal curvature is one, we can derive a contradiction from Theorem \ref{mainth3}.

By Theorem \ref{ChenGuan}, we only need to deal with the case that the multiplicity of zero principal curvature is two, i.e. $I_1=\{1, 2\}$. In this case, the principle curvatures on $M^{5}$ are respectively $0, 0, \lambda_{3}, \lambda_{4}, \lambda_{5}$ with $\lambda_{3}+ \lambda_{4}+\lambda_{5}=5H$.

According to the proof of Theorem \ref{mainth3}, we can deduce a contradiction when $P_{\alpha}\neq0$ or $P_{\alpha}=0$ for all $P_{\alpha}$.  Let us consider the remaining case that $P_{3}=0$, $P_{4}=P_{5}\neq0$. It follows from \eqref{eqsystem00} that $\lambda_4+\lambda_5=0$ and $\lambda_3=5H$. For simplicity, we denote $\lambda_{4}=-\lambda_{5}=:\mu$ and $P_{4}=P_{5}=:P$.  Then the squared norm of the second fundamental form $S$ is given by $S=2\mu^{2}+25H^2$. Since $e_{2}(S)=0$, we have $e_{2}(\mu)=0$ and hence  $\Gamma_{44}^{2}=\Gamma_{55}^{2}=0$. In this case, we deduce from \eqref{F2} that $e_2(P)=\Gamma_{44}^2P-\Gamma_{44}^2\Gamma_{22}^1=0$. Moreover,
$e_{1}(S)=4\mu e_{1}(\mu)=4\mu^{2}P$ and hence $e_{2}e_{1}(S)=0$. Since $e_1(S)\neq0$, $e_i(S)=0$ for $i\geq2$ and $\Gamma_{21}^1=0$, we find
\begin{align}
0=e_{1}e_{2}(S)-e_{2}e_{1}(S)=[e_{1}, e_{2}](S)=(\nabla_{e_{1}}e_{2}-\nabla_{e_{2}}e_{1})S=\Gamma_{12}^{1}e_{1}(S),\nonumber
\end{align}
which means that $\Gamma_{12}^{1}=-\Gamma_{11}^{2}=0$. Then \eqref{F1} turns into
\begin{align}\label{L0}
e_{1}(P)=P^{2}.
\end{align}

One the other hand, taking into account the Gauss equation and \eqref{gauss}, from Lemma \ref{lemma1} we obtain
\begin{align}
0=R_{1212}=&e_{1}(\Gamma_{22}^{1})-e_{2}(\Gamma_{12}^{1})+\sum_{m}\Big(\Gamma_{22}^{m}\Gamma_{1m}^{1}-
\Gamma_{12}^{m}\Gamma_{2m}^{1}-(\Gamma_{12}^{m}-\Gamma_{21}^{m})\Gamma_{m2}^{1}\Big)\nonumber\\
=&e_{1}(\Gamma_{22}^{1})-(\Gamma_{22}^{1})^{2}. \label{L1}
\end{align}
Based on the above discussion, \eqref{eqsystem180} becomes
\begin{align}\label{G1}
-4\mu^{2}P^{2}+4\mu^{2}P\Gamma_{22}^{1}+(2\mu^{2}+25H^2)^{2}=0,
\end{align}
and hence
\begin{align}\label{G2}
\Gamma_{22}^{1}=\frac{4\mu^{2}P^{2}-(2\mu^{2}+25H^2)^{2}}{4\mu^{2}P}.
\end{align}
Noting $e_1(\mu)=\mu P$ and differentiating \eqref{G1} with respect to $e_{1}$, from \eqref{L0} and \eqref{L1} we have
\begin{align}\label{G3}
-4P^{2}+3 P\Gamma_{22}^{1}+(\Gamma_{22}^{1})^{2}+2(2\mu^{2}+25H^2)=0.
\end{align}
Substituting \eqref{G2} into \eqref{G3} and eliminating the terms of $\Gamma_{22}^{1}$  one gets
\begin{align}
&16\mu^4P^2\Big(2(2\mu^{2}+25H^2)-4P^{2}\Big)+12\mu^2P^2\Big(4\mu^{2}P^{2}-(2\mu^{2}+25H^2)^{2}\Big)\nonumber\\
&+\Big(4\mu^{2}P^{2}-(2\mu^{2}+25H^2)^{2}\Big)^2=0,\nonumber
\end{align}
that is
\begin{align} \label{G4}
32\mu^4P^2(2\mu^{2}+25H^2)-20\mu^2P^2(2\mu^{2}+25H^2)^2+(2\mu^{2}+25H^2)^{4}=0.
\end{align}
Because $2\mu^{2}+25H^2\neq0$, \eqref{G4} becomes
\begin{align} \label{G5}
32\mu^4P^2-20\mu^2P^2(2\mu^{2}+25H^2)+(2\mu^{2}+25H^2)^{3}=0.
\end{align}
Differentiating with respect to $e_{1}$ on  \eqref{G5}, from \eqref{L0} and \eqref{L1} we have
\begin{align} \label{G6}
-12\mu^2P^2-500H^2P^2+3(2\mu^{2}+25H^2)^{2}=0.
\end{align}
Differentiating with respect to $e_{1}$ on \eqref{G6} yields
\begin{align} \label{G7}
-12\mu^2P^2-250H^2P^2+6\mu^2(2\mu^{2}+25H^2)=0.
\end{align}
Eliminating the terms of $P^2$ between \eqref{G6} and \eqref{G7} gives
\begin{align}
2\mu^2(12\mu^2+500H^2)=(2\mu^{2}+25H^2)(12\mu^2+250H^2),\nonumber
\end{align}
that is
\begin{align} \label{G8}
4\mu^2H^2-125H^4=0,
\end{align}
which implies that $\mu$ is constant and $S=2\mu^2+25H^2$ is constant as well. This contradicts $\nabla S(p)\neq 0$.

In summary, we conclude that $\mathcal{N}=\{p\in M:\nabla S(p)\neq 0\}=\emptyset$ and according to \eqref{triharmonic condition1} we know that $M^n$ is minimal. This completes the proof of Theorem \ref{mainth4}. $\hfill\square$

\medskip

\medskip\noindent
{\bf Acknowledgement:} {The authors are supported by the NSFC (No.11801246) and  Liaoning
Provincial Education Department Project (No.LJKMZ20221561) }




\end{document}